\documentclass[12pt]{article}
\usepackage[top=1in, bottom=1in, left=1in, right=1in]{geometry}
\usepackage{amssymb}
\usepackage{float,epsfig, floatflt,here}
\usepackage{xcolor,hyperref}
\usepackage{graphicx}
\usepackage{amsfonts,calc}
\usepackage{amsthm}
\usepackage{tikz}
\usepackage{pgf}
\usepackage{pgfplots}

\usepackage{amssymb}

\usepackage{pgf,tikz,amsmath}
\usepackage{blkarray}
\usepackage{caption}
\usepackage{enumerate}
\usepackage{xspace}
\usepackage{setspace}
\usepackage{bm}
\usepackage{array}
\usepackage{longtable}
\usepackage{placeins}

\usepackage{float}
\usepackage{graphicx}
\usetikzlibrary{arrows}
\usetikzlibrary{decorations.markings}
\tikzset{->-/.style={decoration={
markings,
mark=at position #1 with {\arrow{>}}},postaction={decorate}}}

\usepackage{graphics}
\usepackage{graphicx}

\def\bpsp{\begin{pspicture}}
	\def\epsp{\end{pspicture}}
\newcommand\arrow[2]{\rotatebox{#1}{\scalebox{2}{\psline[linecolor=#2]{->}(0,0)(.03,0)}}}

\newtheoremstyle{plain}
{\topsep}
{\topsep}
{\itshape}
{}
{\bfseries}
{.}
{ }
{\thmname{#1}\thmnumber{ #2}\thmnote{ (#3)}}

\theoremstyle{plain}
\newtheorem{theorem}{Theorem}[section]
\newtheorem{lemma}[theorem]{Lemma}
\newtheorem{corollary}[theorem]{Corollary}

\theoremstyle{definition}
\newtheorem{definition}[theorem]{Definition}
\newtheorem{remark}[theorem]{Remark}
\newtheorem{Conjecture}[theorem]{Conjecture}

\title{A maximum matching-based refinement of Brouwer’s conjecture }
\author{ Tahir Shamsher\footnote{Corresponding author. Department of Mathematics, IIT Bhubaneswar, Bhubaneswar, 752050, India (tahir.maths.uok@gmail.com).} 
 }
\date{}
\begin{document}
\maketitle
\begin{abstract}
Let $G$ be a simple graph on $n$ vertices and $e(G)$ edges. Let $\mu_1\geq \cdots \geq \mu_{n-1}\geq \mu_n=0$ be the Laplacian eigenvalues of $G$. For $k=1, \ldots, n$, let $S_k(G)=\sum_{i=1}^{k}\mu_i$. Brouwer’s conjecture asserts that for any $k\in\{1,\ldots, n\}$, $S_k(G)\leq e(G)+\binom{k+1}{2}$. In [Bounding the sum of the largest Laplacian eigenvalues of graphs, {\em Discrete Appl. Math.}, 170:95--103, (2014)], Rocha and Trevisan showed that the conjecture holds true for $1 \leq k \leq \lfloor g/5 \rfloor$, where $g$ denotes the girth of $G$. This bound on $k$ was later improved by Chen in [Improved results on Brouwer’s conjecture for sum of the Laplacian eigenvalues of a graph, {\em Linear Algebra Appl.}, 557:327--338, (2018)], who established that the conjecture holds for $1 \leq k \leq \lfloor g/4 \rfloor$. In this article, we further strengthen these results by proving that the Brouwer’s conjecture holds for $1\leq k\leq\left\lfloor \frac{m(G)}{2}\right\rfloor,$ where $m(G)$ denotes the matching number of $G$. Since $m(G)\geq \lfloor g/2\rfloor$, the case constitutes a genuine improvement over the aforementioned results. As an application, we show that if $G$ is a graph of order $n$ and with a perfect matching, then the Brouwer’s conjecture holds for $1\leq k\leq \left\lfloor \frac{n}{4}\right\rfloor$. Finally, we provide a new perspective on verifying Brouwer’s conjecture by proving that $G$ satisfies the Brouwer's conjecture if and only if for a fixed positive integer $h$, $\mathcal{S}_h(\overline{G})\leq e(\overline{G})+\binom{h+1}{2}$ holds whenever $\mathcal{S}_h(G)\leq e(G)+\binom{h+1}{2}$, where $\overline{G}$ denotes the complement of $G$.
\end{abstract}
	
\noindent {\bf Keywords:} Laplacian matrix; Brouwer's conjecture; Laplacian energy conjecture; spectrally threshold dominated graph; matching number 

\noindent {\bf AMS Subject Classification:} 05C50; 05C09; 05C92
	
\section{Introduction}\label{sec1}
Let $G=(V,E)$ be a graph with the vertex set $V=[n]:=\{1,\ldots,n\}$ and edge set $E$. Let $e(G)=|E|$ denote the number of edges in $G$, and let $\overline{G}$ denote the complement of $G$. For each vertex $i\in V$, let $N_G(i)$ denote the set of its neighbours and $d_i(G)=|N_G(i)|$ denote its degree in $G$. Throughout this paper, we assume that the vertices are labelled so that the degree sequence is nonincreasing, that is, $d_1(G)\geq d_2(G)\geq\cdots\geq d_n(G)$. If $G$ is connected, then any degree sequence $d(G)=(d_1(G),\ldots,d_n(G))$ arising this way is an $n$-partition of $2e(G)$. For $i\in [n]$, the {\em conjugate} of $d_i$ is defined as $d_i^*(G)=|\{j\in V:d_j(G)\geq i\}|$. Note that $d_n^*(G)=0$. 

 

If $H$ is a subgraph of the graph $G$, $G-H$ refers to the graph obtained by removing all the edges in $H$ from $G$. A {\em matching} in a graph $G$ is a collection of vertex-disjoint edges. The maximum size of a matching in $G$ is known as the {\em matching number} of $G$, denoted by $m(G)$. A matching is called a {\em perfect matching} if it covers every vertex of $G$. In that case $m(G)=\frac{n}{2}$. A {\em vertex cover} in $G$ is a subset of $V$ that includes at least one endpoint of every edge in $G$. The {\em covering number} of $G$, denoted by $\tau(G)$, is defined as the minimum number of vertices in the vertex cover of $G$. A {\em clique} in $G$ is a complete subgraph. The order of the maximum clique in $G$ is called the {\em clique number} of $G$. The {\em girth} of $G$, denoted by $g$, is the length of the shortest cycle in $G$. If $G$ contains no cycles (that is, it is acyclic), then the girth is defined to be infinite.
 
By $K_{n}$, we denote the complete graph on $n$ vertices. A {\em threshold graph} (also known as a nested split graph) is a graph without induced subgraphs isomorphic to the cycle on four vertices, or the path on four vertices, or two disjoint copies of $K_2$. Threshold graphs admit several equivalent definitions; in particular, they can be recursively defined using a binary code. In a recursive process, we start with an isolated vertex, and at each step a vertex is added either as a new isolated vertex or as a dominating vertex, that is, adjacent to all the previous vertices. In this way, a threshold graph with $n$ vertices is represented by a binary sequence $\mathbf{b}=b_1 \cdots b_n$, where $b_{i}$ is $0$ if the vertex $v_{i}$ is added as an isolated vertex, and it is $1$ if $v_{i}$ is added as a dominating vertex. If we denote by powers the repetitions in the binary sequence, then we can write $\mathbf{b}=0^{m_1}1^{n_1}\cdots 0^{m_r}1^{n_r}$. Let $T(\mathbf{b})=T(b_1\cdots b_n)$ denote the threshold graph obtained with the binary sequence $\mathbf{b}=b_1\cdots b_n$.

A {\em pineapple graph} $P_{n,f}$ is a graph on $n$ vertices obtained from a complete graph $K_{f+1}$ on $f+1$ vertices by adding $n-f-1$ pendant vertices to one vertex of $K_{f+1}$ (see Figure \ref{fig1}). It is easy to see that $P_{n,f}=T(0^11^{f-1}0^{n-f-1}1)$ is a threshold graph of trace $f$.
 
\begin{figure*}[!htb] 
\centering
\begin{tikzpicture}[line cap=round,line join=round,>=triangle 45,x=.8cm,y=.8cm] 		
\draw [line width=1pt] (-10,4)-- (-8,4);
\draw [line width=1pt] (-10,4)-- (-10,2);
\draw [line width=1pt] (-8,4)-- (-8,2);
\draw [line width=1pt] (-10,2)-- (-8,2);
\draw [line width=1pt] (-8,4)-- (-6,3);
\draw [line width=1pt] (-8,2)-- (-6,3);
\draw [line width=1pt] (-10,4)-- (-8,2);
\draw [line width=1pt] (-10,2)-- (-8,4);
\draw [line width=1pt] (-10,4)-- (-6,3);
\draw [line width=1pt] (-10,2)-- (-6,3);
\draw [line width=1pt] (-6,3)-- (-5,4);
\draw [line width=1pt] (-6,3)-- (-4.5,3.7);
\draw [line width=1pt] (-6,3)-- (-4.22,3.3);
\draw [line width=1pt] (-6,3)-- (-4.2,2.76);
\draw [line width=1pt] (-6,3)-- (-4.52,2.32);
\draw [line width=1pt] (-6,3)-- (-5,2);
\begin{scriptsize} 			
\draw [fill=black] (-10,4) circle (2.5pt);
\draw [fill=black] (-8,4) circle (2.5pt);
\draw [fill=black] (-10,2) circle (2.5pt);
\draw [fill=black] (-8,2) circle (2.5pt);
\draw [fill=black] (-6,3) circle (2.5pt);
\draw [fill=black] (-5,4) circle (2.5pt);
\draw [fill=black] (-4.5,3.7) circle (2.5pt);
\draw [fill=black] (-4.22,3.3) circle (2.5pt);
\draw [fill=black] (-4.2,2.76) circle (2.5pt);
\draw [fill=black] (-4.52,2.32) circle (2.5pt);
\draw [fill=black] (-5,2) circle (2.5pt); 			
\end{scriptsize}
\end{tikzpicture}
\caption{Pineapple graph $P_{11,4}=T(0^11^{3}0^{6}1)$}
\label{fig1}
\end{figure*}
 
A {\em split graph} is a graph whose vertex set $V$ can be bi-partitioned into a clique $V_1$ and an independent set $V_2$. Notice that $V$ may have more than one such partition $(V_1,V_2)$. For convenience, we always assume that the number of vertices in $V_1$ is as small as possible while $V_2$ is an independent set, and we denote such a split graph by $S_{n, n_1}$, where $n=|V|$ and $n_1=\left|V_1\right|$. A {\em complete split graph}, denoted by $C S_{n, n_1}$, is the split graph $S_{n, n_1}$ for which each vertex in $V_2$ is adjacent to each vertex in $V_1$. 


 
The matrix $L(G)=D(G)-A(G)$ is called the {\em Laplacian matrix} of $G$, where $A(G)$ is the adjacency matrix of $G$ and $D(G)=$ $\operatorname{diag}(d_1(G),\ldots, d_n(G))$ is the diagonal matrix of vertex degrees of $G$. It is well known that $L(G)$ is positive semidefinite, and hence, all its eigenvalues are nonnegative real numbers. Let $\mu_1(G) \geq \mu_2(G) \geq \cdots \geq \mu_n(G)$ be the eigenvalues of $L(G)$. Since each row sum of $L(G)$ is $0$, $\mu_n(G)=0$. The eigenvalues of $L(G)$ are known as the {\em Laplacian eigenvalues} of $G$. It is easy to see that $\sum_{i=1}^n \mu_i(G)=2 e(G)$. For $k\in\{1,\ldots, n\}$, let $$S_k(G) = \sum_{i=1}^{k} \mu_i(G)$$
be the sum of the $k$ largest Laplacian eigenvalues of $G$. The parameter $S_k(G)$ sheds light on several fundamental problems considered in spectral graph theory. In particular, the parameter $S_k(G)$ shares a strong connection with the spectral quantity known as the {\em Laplacian energy}, defined by Gutman and Zhou~\cite{GutmanZhou} as
$$\mathrm{LE}(G) = \sum_{i=1}^{n} \left| \mu_i(G) - \frac{2e(G)}{n} \right|.$$

Since its introduction, Laplacian energy has been a widely studied topic. One of the central challenges in this area is to identify, among all $n$-vertex graphs, the one that maximizes the Laplacian energy (see \cite{Vina2013}).

The sum $S_k(G)$ of the $k$ largest Laplacian eigenvalues has been the subject of two important conjectures. Bai\cite{Bai2011} provided a breakthrough result by proving a long-standing conjecture of Grone and Merris \cite{Grone1994}, which can be formally stated as follows. 

\begin{theorem} \label{groneme} {\rm (Grone-Merris-Bai Theorem) } 
Let $G$ be a graph on $n$ vertices and $k$ be any integer such that $1 \leq k \leq n$. Then, 
$$S_k(G)\leq \sum_{i=1}^{k} d_i^*(G).$$
\end{theorem}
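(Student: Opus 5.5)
This is the Grone--Merris--Bai theorem. The plan is to follow Bai's strategy: reduce to threshold graphs, where the inequality is an equality, and then show the inequality survives deleting an edge. Everything is by induction on the number of edges.

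\textbf{Threshold graphs.} First I would verify that for every threshold graph $T(\mathbf{b})$ the Laplacian spectrum equals the conjugate degree sequence, $\mu_i=d_i^*$ for all $i$. I would prove this by induction along the binary code $\mathbf{b}$.

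Adding an isolated vertex appends an eigenvalue $0$. Adding a dominating vertex to a graph $H$ on $n-1$ vertices sends the spectrum $\mu_1,\ldots,\mu_{n-2},0$ of $H$ to $n,\mu_1+1,\ldots,\mu_{n-2}+1,0$. This follows from $L(H\vee K_1)$ and the complement relation $\mu_i(\overline{G})=n-\mu_{n-i}(G)$.

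On the degree side, the same two operations transform the conjugate sequence identically: adding a dominating vertex raises every degree by one and creates a vertex of degree $n-1$. Hence equality $S_k=\sum_{i\le k}d_i^*$ holds for threshold graphs, for every $k$.

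\textbf{General graphs.} For arbitrary $G$, I would induct on the number of edges, or more precisely on a potential measuring the distance from a threshold graph. If $G$ is not threshold, it contains an induced $2K_2$, $P_4$ or $C_4$. In each case there are vertices $u,v,w$ with $uw\in E$, $vw\notin E$ and $d_v\ge d_u$ (or a similar configuration). Shifting the edge $uw$ to $vw$ produces a graph $G'$ whose degree sequence majorizes that of $G$. Consequently $\sum_{i\le k}d_i^*(G')\le\sum_{i\le k}d_i^*(G)$, since conjugation reverses majorization.

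The difficulty is that shifting does not monotonically increase $S_k$. Bai's key idea is to handle the spectral side separately, and this step is the main obstacle. For $k$ fixed, I would write
\[
S_k(G)=\max_{\dim W=k}\operatorname{tr}(P_W L(G)),
\]
which is Ky Fan's principle. Let $W$ be the optimal subspace, the span of the top $k$ eigenvectors, with orthogonal projection $P$. Then
\[
S_k(G)=\sum_{xy\in E}\|P(e_x-e_y)\|^2 .
\]
Next I would use the identity
\[
\|P(e_v-e_w)\|^2+\|P(e_u-e_w)\|^2\ \text{vs.}\ \|P(e_u-e_v)\|^2 ,
\]
together with the freedom to choose which of $u,v$ receives the edge. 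The aim is to show that one of the two shifts, $uw\to vw$ or the symmetric one, does not decrease $\operatorname{tr}(PL)$, by comparing the contributions $\|P(e_v-e_w)\|^2$ and $\|P(e_u-e_w)\|^2$. After the shift, $S_k(G')\ge\operatorname{tr}(PL(G'))\ge S_k(G)$.

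Since either shift preserves the edge count and strictly increases $\sum d_i^2$, repeating this terminates at a threshold graph $T$. Chaining the inequalities gives
\[
S_k(G)\le S_k(T)=\sum_{i\le k}d_i^*(T)\le\sum_{i\le k}d_i^*(G).
\]
The second inequality requires that the shift chosen to please the spectral side also moves the degree sequence up in majorization order. Choosing the direction compatible with both sides is the delicate point. I expect to need Bai's more refined argument here, possibly applied to the complement or to a symmetric two-vertex averaging, if a single shift direction fails.
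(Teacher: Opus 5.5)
The paper does not prove this statement; it quotes it from Bai \cite{Bai2011}, so your proposal has to stand on its own. Two of your ingredients are fine. Threshold graphs satisfy $\mu_i=d_i^*$, which is Merris's theorem and follows from your join-with-$K_1$ induction. And a shift $uw\to vw$ with $d_v\ge d_u$ raises the degree sequence in majorization order, so it lowers every partial sum $\sum_{i\le k}d_i^*$.

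The gap is the spectral step, and that step is false, not merely unproved. A single-edge shift can strictly decrease $S_k$ whatever triple, direction or projection you choose. Take $G=K_{3,3}$ with sides $A=\{a_1,a_2,a_3\}$, $B=\{b_1,b_2,b_3\}$, and $k=1$.
\begin{itemize}
\item Here $\mu_1=6$ is simple with eigenvector $x=\mathbf{1}_A-\mathbf{1}_B$. With $P=xx^{T}/6$, $\operatorname{tr}(PL)=\frac16\sum_{ab\in E}(x_a-x_b)^2$ counts only edges between the two sides.
\item Every admissible shift has $u,w$ on opposite sides and $v\neq w$ on the side of $w$. So it replaces a term $\tfrac23$ by $0$.
\item All degrees are equal, so every shift is also acceptable on the degree side. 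The failure is purely spectral.
\item Up to isomorphism there is only one shifted graph, $G'=K_{3,3}-b_1a_2+a_1a_2$. Its complement is two disjoint triangles, minus $a_1a_2$, plus $b_1a_2$, which is connected. Hence $S_1(G')=6-\mu_5(\overline{G'})<6=S_1(G)$.
\end{itemize}
No choice of $u,v,w$, direction, or $P$ can give $S_1(G)\le S_1(G')$. Your chain $S_k(G)\le S_k(G_1)\le\cdots\le S_k(T)$ cannot even begin, although the inequality holds for $K_{3,3}$, with equality since $d_1^*=6$.

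Your termination argument is also wrong as written. Moving $uw\to vw$ changes $\sum d_i^2$ by $2(d_v-d_u+1)$. So when $d_v>d_u$, the ``symmetric'' fallback shift $vw'\to uw'$ does not increase $\sum d_i^2$. It also lowers the degree sequence in majorization order, which breaks your final inequality $\sum_{i\le k}d_i^*(T)\le\sum_{i\le k}d_i^*(G)$.

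What is missing is the actual substance of the theorem. You need an argument that bounds $\operatorname{tr}(PL(G))$ by $\sum_{i\le k}d_i^*(G)$ for every rank-$k$ orthogonal projection $P$, without relying on $S_k$ being monotone along edge shifts. Deferring this to ``Bai's more refined argument'' leaves the proposal without its only nontrivial step.
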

The focus of this paper lies in another prominent conjecture connected to the Grone-Merris-Bai framework, namely Brouwer’s conjecture, originally posed by Andries E. Brouwer \cite{Brouwer2012}. The conjecture is formulated as follows.
\begin{Conjecture} {(Brouwer's conjecture)}
{\em Let $G$ be a graph on $n$ vertices and with $e(G)$ edges. Then, for every $k\in\{1, 2, \ldots, n\}$, 
$$S_k(G)=\sum_{i=1}^{k} \mu_i(G)\leq e(G)+\binom{k+1}{2}.$$}
\end{Conjecture} 
It can be observed that for nonsplit graphs, Brouwer’s inequality provides tighter bounds than the bounds provided by Grone-Merris \cite{Grone1994} (see the article \cite{Mayank2010}). As a direct consequence of Theorem 1.1, the following upper bound on the Laplacian energy can be obtained.
\begin{equation}\label{lener}
\mathrm{LE}(G)\leq \sum_{i=1}^{n}\left|d_i^*(G)-\frac{2e(G)}{n}\right|.
\end{equation}

It is well known that the Laplacian eigenvalues of threshold graphs are the conjugates of their degrees (see \cite{Merris1994}). Thus, it follows that the bound (\ref{lener}) is attained by the threshold graphs. This was proved by Helmberg and Trevisan \cite{Helm2015}, and the main tool used in the proof was Theorem \ref{groneme}. It is also reasonable to conceive that the class of threshold graphs is a good class of examples of graphs with the large(st) Laplacian energy. 

The Laplacian energy conjecture \cite{Vina2013} states that among all connected graphs on $n$ vertices, the pineapple graph (a threshold graph), $P_{n,\left\lfloor \frac{2n}{3} \right\rfloor}=T(0^11^{\left\lfloor \frac{2n}{3} \right\rfloor-1}0^{n-\left\lfloor \frac{2n}{3} \right\rfloor-1}1),$ with trace $\left\lfloor \frac{2n}{3}\right\rfloor$ maximizes the Laplacian energy. Among all connected threshold graphs, the pineapple graph is indeed the maximizer (see \cite{Helm2015}). For general threshold graphs on $n$ vertices, the clique of size $\left\lfloor \frac{2n+1}{3} \right\rfloor+1$ together with $\left\lfloor \frac{n-3}{3} \right\rfloor$ isolated vertices is a threshold graph maximizing Laplacian energy \cite{Helm2015}, and it has been conjectured that this graph has maximum Laplacian energy among all graphs on $n$ vertices. 

Helmberg and Trevisan \cite{Helm2017} introduced the concept of spectral threshold dominance in graphs as defined below:
 
\begin{definition}
Let $G$ be a graph on $n$ vertices and with $e(G)$ edges. The graph $G$ is said to be {\em spectrally threshold dominated} if for every $k\in\{1,2,\ldots,n\}$, there exists a threshold graph $T_k$ with the same number of vertices and edges such that
$$\sum_{i=1}^{k}d_i^*(T_k)=\sum_{i=1}^{k}\mu_i(T_k)\geq\sum_{i=1}^{k}\mu_i(G).$$
\end{definition}

In the same article, the authors proved the following result.

\begin{theorem} {\rm(Helmberg and Trevisan \cite{Helm2017})} \label{Hel2017}
For each spectrally threshold dominated graph $G$, there exists a threshold graph with the same number of vertices and edges whose Laplacian energy is at least as large as that of $G$.
\end{theorem}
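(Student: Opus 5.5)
The plan is to compare the Laplacian energy of $G$ with that of a threshold graph through the partial sums $S_k$, using the hypothesis that $G$ is spectrally threshold dominated. First I would rewrite the energy in terms of these sums. Put $\bar d=2e/n$, the average degree, which depends only on $n$ and $e=e(G)$. Since $\sum_i(\mu_i-\bar d)=0$, we have
$$\mathrm{LE}(G)=2\max_{0\le k\le n}\bigl(S_k(G)-k\bar d\bigr),$$
and the maximum is attained at $k=\sigma(G)$, the number of Laplacian eigenvalues of $G$ that are at least $\bar d$. Indeed, the partial sums of $\mu_i-\bar d$ increase while the terms are nonnegative and decrease afterwards.

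Next I would apply the dominance hypothesis at this particular index. Let $k_0=\sigma(G)$. By the definition of spectral threshold dominance there is a threshold graph $T=T_{k_0}$ on $n$ vertices with $e$ edges such that $S_{k_0}(T)\ge S_{k_0}(G)$. Because $T$ has the same $n$ and $e$, it has the same average degree $\bar d$. Applying the max formula to $T$ then gives
$$\mathrm{LE}(T)=2\max_k\bigl(S_k(T)-k\bar d\bigr)\ge 2\bigl(S_{k_0}(T)-k_0\bar d\bigr)\ge 2\bigl(S_{k_0}(G)-k_0\bar d\bigr)=\mathrm{LE}(G).$$
This threshold graph $T$ is the one the statement asks for. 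For threshold graphs $S_k(T)=\sum_{i\le k}d_i^*(T)$, but the argument does not need this identity.

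The only genuine step is establishing the max formula, and it is elementary. For any $k$,
$$\sum_i|x_i|\ge 2\sum_{i\le k}x_i$$
when $\sum_i x_i=0$, because $\sum_{i\le k}x_i=-\sum_{i>k}x_i$. Equality holds exactly when $k$ splits the nonincreasing sequence $x_i=\mu_i-\bar d$ into its nonnegative and negative parts. I expect no serious obstacle beyond this. The one point to watch is that a single index $k_0$ suffices: we do not need one threshold graph that dominates at every $k$ simultaneously, which is exactly why the definition allows $T_k$ to depend on $k$.
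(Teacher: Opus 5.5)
Your proof is correct and complete. The paper gives no proof of this theorem and only cites it from Helmberg and Trevisan; your argument, namely the identity $\mathrm{LE}(G)=2\max_{k}\bigl(S_k(G)-2ke(G)/n\bigr)$ with the maximum attained at $k=\sigma(G)$, combined with the dominating threshold graph $T_{\sigma(G)}$, is the standard argument for that result. The one detail worth stating explicitly is that $\sigma(G)\ge 1$, since $\mu_1(G)$ is at least the average $2e(G)/n$; this puts $k_0$ in the range $\{1,\ldots,n\}$ where the definition supplies $T_{k_0}$.
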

Furthermore, this notion was shown to be closely related to both Brouwer’s conjecture and the conjecture concerning Laplacian energy. In particular, the following result was demonstrated. 
\begin{theorem} {\rm (Helmberg and Trevisan~\cite{Helm2017})}\label{bro=sthre}
A graph $G$ satisfies Brouwer’s conjecture if and only if it is spectrally threshold dominated.
\end{theorem}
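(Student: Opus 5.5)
The plan is to prove the two implications separately. Both rest on the identity $\sum_{i=1}^{k} d_i^*(H)=\sum_{j=1}^{n}\min\{d_j(H),k\}$, valid for every graph $H$, together with the fact quoted above that for a threshold graph $T$ the Laplacian spectrum is the conjugate degree sequence, so $S_k(T)=\sum_{i\le k}d_i^*(T)$.

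\emph{Spectral threshold dominance implies Brouwer.} Fix $k$ and let $T_k$ be the dominating threshold graph, with $n$ vertices and $e=e(G)$ edges. Then $S_k(G)\le \sum_{i\le k}d_i^*(T_k)$, so it suffices to show that every threshold graph $T$ satisfies Brouwer's inequality. Let $A$ be the set of the $k$ vertices of largest degree in $T$ and $B=V\setminus A$. Bound $\min\{d_j,k\}$ vertex by vertex:
\begin{itemize}
\item for $j\in B$, use $\min\{d_j,k\}\le d_A(j)+d_B(j)$, where $d_A(j)$ and $d_B(j)$ count the neighbours of $j$ in $A$ and in $B$;
\item for $j\in A$, use $\min\{d_j,k\}\le d_A(j)+\min\{d_B(j),\,k-d_A(j)\}$.
\end{itemize}
Comparing with $e=e(A)+e(A,B)+e(B)$, what must be shown is a trade-off between the edges missing inside $A$ and the edges present inside $B$. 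This is where the threshold structure enters: neighbourhoods are nested in threshold graphs. Hence if $B$ contains an edge $uv$, every vertex of $A$ is adjacent to $u$ and $v$, and the vertices of $A$ are then pairwise adjacent as well. So either $e(B)=0$, or $A$ is a clique and every $A$-vertex already has at least $k-1$ neighbours in $A$. In each case the inequality $\sum_j\min\{d_j,k\}\le e+\binom{k+1}{2}$ should reduce to counting. I expect this case analysis, in particular handling vertices of $A$ whose degree is below $k$, to be the fiddliest part of this direction.

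\emph{Brouwer implies spectral threshold dominance.} Fix $k$. It suffices to build, for every admissible $e$, a threshold graph $T$ on $n$ vertices with $e$ edges such that
\[
\sum_{i\le k}d_i^*(T)\ \ge\ \min\Bigl\{2e,\ e+\binom{k+1}{2}\Bigr\},
\]
because $S_k(G)\le 2e$ trivially and $S_k(G)\le e+\binom{k+1}{2}$ by hypothesis.
\begin{itemize}
\item \emph{Small $e$.} Take $T$ to be a clique plus a dominating-vertex extension plus isolated vertices, with every degree at most $k$. Then $\sum_j\min\{d_j,k\}=2e$.
\item \emph{Middle range} $\binom{k}{2}+k\le e\le \binom{k}{2}+k(n-k)$. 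Let $A$ be a $k$-clique and $B$ an independent set, and attach each $B$-vertex to an initial segment of $A$. This keeps neighbourhoods nested, so $T$ is threshold. Choose the attachments so that each $A$-vertex has a neighbour in $B$. Then $\sum_j\min\{d_j,k\}=k\cdot k+e(A,B)=k^2+e-\binom{k}{2}=e+\binom{k+1}{2}$.
\item \emph{Largest $e$.} Put edges inside $B$ as well, by adding further dominating vertices in the binary code. The count must then be redone, and I would expect equality with $e+\binom{k+1}{2}$ to fail, so one needs a direct argument that the resulting threshold graph still dominates $S_k(G)$. Here the plan is to pass to complements, using $\mu_i(\overline{G})=n-\mu_{n-i}(G)$ for $1\le i\le n-1$ and the fact that complements of threshold graphs are threshold. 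This converts the dense case into a sparse one for $\overline{G}$ and an index $n-1-k$.
\end{itemize}

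The main obstacle is this dense regime in the second direction. There, one has to check carefully that the complement translation matches the bounds $e+\binom{k+1}{2}$ for $G$ and $e(\overline{G})+\binom{n-k}{2}$ for $\overline{G}$. Choosing $T$ as a threshold graph maximizing $\sum_j\min\{d_j,k\}$ among threshold graphs with $n$ vertices and $e$ edges may be the cleanest way to organise all three ranges at once.
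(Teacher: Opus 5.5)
The paper gives no proof of this theorem; it is quoted from Helmberg and Trevisan, so your proposal has to be judged on its own.

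\textbf{Second direction.} This direction works. The small and middle constructions are threshold graphs with $\sum_j\min\{d_j,k\}$ equal to $2e$ and to $e+\binom{k+1}{2}$, respectively. The dense regime is not really an obstacle. If $e>\binom{k}{2}+k(n-k)$, then $e(\overline{G})<\binom{n-k}{2}$. So your small construction, with $n-1-k$ in place of $k$, gives a threshold graph with $e(\overline{G})$ edges and maximum degree at most $n-1-k$. Its complement $T$ is threshold with $e$ edges and minimum degree at least $k$. Hence $\sum_{i\le k}d_i^*(T)=kn\ge S_k(G)$, since $\mu_1(G)\le n$. Your complement bookkeeping is also consistent, because $S_k(G)-e(G)-\binom{k+1}{2}=S_{n-1-k}(\overline{G})-e(\overline{G})-\binom{n-k}{2}$.

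\textbf{First direction: the gap.} With your two per-vertex bounds, the inequality you must prove is
\[
e(A)+e(B)+\sum_{j\in A}\min\{d_B(j),\,k-d_A(j)\}\le\binom{k+1}{2}.
\]
In your case $e(B)\ge 1$ you showed that $A$ is a clique and that every $A$-vertex has a neighbour in $B$. The left side is then exactly $\binom{k+1}{2}+e(B)$, so the inequality fails. There are no missing edges in $A$ to trade against. For $T=K_{k+2}$ your bounds give $k^2+2k+2$, while $e+\binom{k+1}{2}=(k+1)^2$. The loss comes from the bound $\min\{d_j,k\}\le d_j$ on $B$.

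Your own nesting argument repairs this. A $B$-vertex with a $B$-neighbour is adjacent to all of $A$, so $\min\{d_j,k\}=d_A(j)$ for \emph{every} $j\in B$. Hence $\sum_{j\in B}\min\{d_j,k\}=e(A,B)$, the term $e(B)$ becomes slack, and what remains is
\[
e(A)+\sum_{j\in A}\min\{d_B(j),k-d_A(j)\}\le\binom{k+1}{2}.
\]
This is exactly the case you left open, and one more nesting step closes it. Suppose $a,a'\in A$ are non-adjacent with $d_a\ge d_{a'}$; then $a'$ has no neighbour in $B$. Indeed, such a $b$ would lie in $N(a')\subseteq N(a)$. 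Then $d_{a'}\ge d_b$ forces $a\in N(b)\setminus\{a'\}\subseteq N(a')$, a contradiction. So each non-edge of $A$ has an endpoint whose term is $0$. This gives
\[
\sum_{j\in A}\min\{d_B(j),k-d_A(j)\}\le\sum_{j\in A,\,d_B(j)>0}(k-d_A(j))\le k+\binom{k}{2}-e(A),
\]
as needed.

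Alternatively, you can avoid this case analysis in either of two ways. You can invoke the known fact that threshold graphs satisfy Brouwer's conjecture, which the paper's introduction lists. Or you can use the trace $f$: $d_i^*(T)=d_i(T)+1$ for $i\le f$, and $d_i^*(T)\le f$ for $i>f$.
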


From the above results and discussion, we observe that if Brouwer’s conjecture is true, and hence, every graph is spectrally threshold dominated, then the Laplacian energy of every graph is bounded by the energy of threshold graphs, which implies the Laplacian energy conjecture.

Recently, Torres and  Trevisan \cite{torres} introduced a graph parameter called the \emph{Brouwer critical index}. For a graph $G$ of order $n$, the Brouwer critical index is an integer
$
h \in \{1,\ldots,n\},
$
with the property that if Brouwer’s conjecture holds for $k=h$, then it automatically holds for every
$
1 \leq k \leq n.
$
In other words, verifying the $h$-Brouwer inequality is sufficient to guarantee that the graph $G$ satisfies Brouwer’s conjecture entirely. More precisely, they proved that a graph satisfies Brouwer’s conjecture if and only if it satisfies a particular Brouwer inequality corresponding to this critical index.
 
Motivated by the above results, we give another way to prove Brouwer's conjecture by proving the following result relating to the graph complement.
 
\begin{theorem} \label{samt}
Let $G$ be a graph on $n$ vertices. Then, $G$ satisfies Brouwer’s conjecture for every value of $k$ $(1\leq k \leq n)$ if and only if for a fixed positive integer $h$, $\mathcal{S}_h(\overline{G})\leq e(\overline{G})+\binom{h+1}{2}$ holds whenever $\mathcal{S}_h(G)\leq e(G)+\binom{h+1}{2}$.
\end{theorem}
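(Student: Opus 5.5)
The plan rests on the classical complement formula for Laplacian spectra: for $1\le i\le n-1$,
$$\mu_i(\overline{G})=n-\mu_{n-i}(G),$$
together with $\mu_n(\overline{G})=\mu_n(G)=0$. Summing over $i=1,\ldots,h$ with $1\le h\le n-1$ gives
$$S_h(\overline{G})=hn-\sum_{j=n-h}^{n-1}\mu_j(G)=hn-\bigl(2e(G)-S_{n-1-h}(G)\bigr).$$
Next I substitute $e(\overline{G})=\binom{n}{2}-e(G)$ into the $h$-th Brouwer inequality for $\overline{G}$. After rearranging, that inequality reads
$$S_{n-1-h}(G)\le e(G)+\binom{n}{2}+\binom{h+1}{2}-hn .$$
A direct expansion shows $\binom{n}{2}+\binom{h+1}{2}-hn=\binom{n-h}{2}$. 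So the key identity to establish first is:
$$\text{the $h$-Brouwer inequality for } \overline{G}\iff \text{the $(n-1-h)$-Brouwer inequality for } G,$$
for every $1\le h\le n-2$. I would also record the trivial cases. For $k\in\{n-1,n\}$ we have $S_k(G)=2e(G)\le e(G)+\binom{k+1}{2}$, since $e(G)\le\binom{n}{2}$. The same holds for $\overline{G}$.

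With this duality, the forward direction is immediate. If $G$ satisfies Brouwer's conjecture for all $k$, then in particular it satisfies the $(n-1-h)$-inequality. Hence $\overline{G}$ satisfies the $h$-inequality, so the implication ``$h$-inequality for $G$ $\Rightarrow$ $h$-inequality for $\overline{G}$'' holds for each $h$. For $h\ge n-1$ the implication holds trivially by the remark above.

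For the converse, I would read the hypothesis as holding for every $h$, and use the duality to turn it into a statement about $G$ alone. The hypothesis becomes ``$\mathrm{B}_h(G)\Rightarrow \mathrm{B}_{n-1-h}(G)$'' for all $h$, where $\mathrm{B}_k(G)$ denotes the $k$-th Brouwer inequality for $G$. The pairs $\{h,n-1-h\}$ therefore behave symmetrically. The goal is to propagate validity from indices where Brouwer's inequality is already known to all indices. As seeds I would use the inequality for $k=1$, which follows from $\mu_1\le\max_{ij\in E}(d_i+d_j)\le e(G)+1$, and the trivial indices $n-1,n$. I would then try to combine the hypothesis with the Torres--Trevisan critical-index result mentioned above: once a single critical inequality is verified, the whole conjecture follows. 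Concretely, I would check that the critical index $h$ of $G$, or its mirror $n-1-h$, is reached by the implication chain from a known valid index.

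I expect this converse step to be the main obstacle. A bare implication $\mathrm{B}_h\Rightarrow \mathrm{B}_{n-1-h}$ is vacuous whenever $\mathrm{B}_h(G)$ fails, so the argument must supply an independent reason why the relevant antecedent holds. The natural candidate is the critical-index characterization, or the fact that the maximal violation index of $\overline{G}$ mirrors that of $G$. If this does not close, I would fall back on the clean corollary the duality gives unconditionally: $G$ satisfies Brouwer's conjecture if and only if $\overline{G}$ does, which is presumably the intended content of the statement.
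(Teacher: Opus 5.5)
\textbf{Verdict: genuine gap.} Your duality $\mathrm{B}_h(\overline{G})\iff \mathrm{B}_{n-1-h}(G)$ is correct, via $\mu_i(\overline G)=n-\mu_{n-i}(G)$ and $\binom{n}{2}+\binom{h+1}{2}-hn=\binom{n-h}{2}$. It is exactly the computation behind the paper's Lemma~\ref{revchen}, and your forward direction is fine. The gap is the converse, and the route you sketch cannot close it.

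Inside the single graph $G$, the hypothesis only says $\mathrm{B}_h(G)\Rightarrow \mathrm{B}_{n-1-h}(G)$. The antecedents you can actually supply are indices known for all graphs, such as $k=1$, and the trivial $k=n-1,n$. These propagate only to their mirrors, and nothing guarantees that the Torres--Trevisan critical index or its mirror lies in that set. Worse, nothing known rules out a graph that fails exactly $\mathrm{B}_h$ and $\mathrm{B}_{n-1-h}$ for one pair of indices. Such a graph satisfies your hypothesis (for every $h$) vacuously. So any argument confined to $G$ itself would amount to proving the conjecture for $G$. Your fallback, that $G$ satisfies the conjecture iff $\overline{G}$ does, is true but is a different statement.

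The missing idea is to change the graph so that the mirror index becomes one where Brouwer's inequality holds unconditionally. The paper does this as follows.
\begin{enumerate}
\item Decompose $E(G)$ into $\tau(G)$ edge-disjoint stars centred at a minimum vertex cover and apply Theorem~\ref{weyl}. This gives $\mathcal{S}_k(G)\le e(G)+k\tau(G)$, so $\mathrm{B}_k(G)$ holds as soon as $k+1\ge 2\tau(G)$.
\item Fix $\alpha\le n$. If $m(G)\ge 2\alpha$, then $\mathrm{B}_\alpha(G)$ holds by Theorem~\ref{bromatc}. Otherwise $\tau(G)\le 2m(G)\le 4\alpha-2$.
\item Let $H=G\cup(8n-4)K_1$, with $N=9n-4$ vertices. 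This changes neither $e$, nor $\tau$, nor $\mathcal{S}_\alpha$ (for $\alpha\le n$), and it gives $N-\alpha\ge 8\alpha-4\ge 2\tau(H)$.
\item Hence $\mathrm{B}_{N-\alpha-1}(H)$ holds by the star bound. The hypothesis at $h=N-\alpha-1$ yields $\mathrm{B}_{N-\alpha-1}(\overline{H})$, and your duality converts this into $\mathrm{B}_\alpha(H)$, i.e.\ $\mathrm{B}_\alpha(G)$.
\end{enumerate}

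Note that this invokes the hypothesis for the padded graph $H$, not for $G$, and at an index that depends on $\alpha$. So the converse has to be read with the hypothesis holding for all $h$ and for all graphs, or at least for $G$ padded by isolated vertices. That is the reading the paper's proof implicitly uses, and it is the one your argument should adopt instead of trying to work inside $G$ alone.
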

 \begin{remark}
 	Chen \cite{Chen2019} proved that for a graph $G$ and its complement $\overline{G}$, if the inequality
 	$$
 	S_k(\overline{G}) \leq e(\overline{G}) + \binom{k+1}{2}
 	$$
 	holds for all $k$, then the corresponding inequality
 	$$
 	S_k(G) \leq e(G) + \binom{k+1}{2}
 	$$
 	also holds for all $k$. Chen established a global relation between a graph $G$ and its complement $\overline{G}$ by proving that if Brouwer’s inequality is satisfied for $\overline{G}$ for every $k$, then it is also satisfied for $G$ for every $k$. In contrast, Theorem~\ref{samt} characterizes the validity of Brouwer’s conjecture for all values of $k$ through the verification of the inequality at a fixed index $h$.
 \end{remark}
Numerous partial results have been established in support of Brouwer’s conjecture. For instance, Mayank \cite{Mayank2010} (see also \cite{Chen2018a}) demonstrated that split graphs and cographs satisfy the conjecture. In \cite{Brouwer2012}, Brouwer himself verified its validity for all graphs with up to 10 vertices using computational methods. When $k = 1$, the conjecture follows from the well-known bound $\mu_1(G) \leq n$. It also holds for the extremal cases $k = n$ and $k = n-1$ by simple arguments. Chen \cite{Chen2019} showed that if Brouwer's conjecture holds for all graphs when $k=p$, then Brouwer's conjecture holds for all graphs when $k=n-p-1$ as well, where $p$ is an integer and $1 \leq p \leq \frac{n-1}{2}$. Thus, Brouwer's conjecture also holds for all graphs when $k=n-2$ and $k=n-3$. In addition, it has been proved that Brouwer's conjecture is true for several classes of graphs (for all $k\in\{1,\ldots,n\}$) such as trees \cite{Haemers2010}, threshold graphs \cite{Haemers2010}, unicyclic graphs \cite{Du2012, Fritscher2011}, bicyclic graphs \cite{Du2012}, regular graphs \cite{Mayank2010}, and split graphs \cite{Mayank2010}. Recently, Cooper \cite{Cooper2021} showed that Brouwer's conjecture is true for planar graphs when $k\geq 11$ and for bipartite graphs when $k\geq\sqrt{32n}$. Ferreira \cite{Ferreira2021} showed that if the Brouwer conjecture is not valid for a graph and this graph respects some conditions, the conjecture is also not valid for the graphs obtained by deleting an edge or vertex. Similar results can also be found in \cite{Chen2018b, Ganie2020, Vladimir2019, Wang2012}. 
 
The remainder of the paper is organized as follows. In Section \ref{sec2}, we establish a connection between the matching number of a graph and Brouwer’s conjecture. Specifically, we show that if $G$ is a graph with $n$ vertices, $e(G)$ edges, and matching number $m(G)$, then $\mathcal{S}_k(G) \leq e(G) + \binom{k+1}{2}$ holds for all integers $k$ satisfying $1 \leq k \leq \left\lfloor\frac{m(G)}{2}\right\rfloor$. This improves several known results previously established in the literature. In Section \ref{sec3}, we present a result demonstrating that Brouwer’s conjecture holds for $k=3$ for all graphs except those whose matching number is $3$, $4$, or $5$.  Finally, in Section \ref{sec4}, we introduce a new approach to verifying Brouwer’s conjecture. We prove that a graph $G$ satisfies Brouwer’s conjecture if and only if for a fixed integer $h$, $\mathcal{S}_h(\overline{G})\leq e(\overline{G})+\binom{h+1}{2}$ holds whenever $\mathcal{S}_h(G)\leq e(G)+\binom{h+1}{2}$.

\section{ Brouwer's conjecture and the  matching number }\label{sec2}
We start the section with the following observation.
 
\begin{theorem}\label{weyl}
{\em (Fulton \cite{Fan1949} (see also \cite{Chen2018a}))} Let $A$ and $B$ be two real symmetric $n\times n$ matrices. Then for any $k$, $1 \leq k \leq n$,
$$\sum_{i=1}^k \lambda_i(A+B) \leq \sum_{i=1}^k \lambda_i(A)+\sum_{i=1}^k \lambda_i(B),$$ where $\lambda_i(M)$ denotes the i-th largest eigenvalue of the symmetric matrix $M$. 	
\end{theorem}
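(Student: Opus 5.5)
We will prove the statement (Ky Fan's inequality) through the variational characterization of partial eigenvalue sums. For a real symmetric $n\times n$ matrix $M$ and $1\le k\le n$, the key identity is
$$\sum_{i=1}^k \lambda_i(M)=\max\{\operatorname{tr}(U^{T}MU): U\in\mathbb{R}^{n\times k},\ U^{T}U=I_k\}.$$
Once this is in hand, the inequality is immediate. Choose $U$ attaining the maximum for $A+B$. Then
$$\sum_{i=1}^k\lambda_i(A+B)=\operatorname{tr}(U^TAU)+\operatorname{tr}(U^TBU)\le \sum_{i=1}^k\lambda_i(A)+\sum_{i=1}^k\lambda_i(B),$$
because each of the two traces is at most the corresponding maximum.

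The plan therefore reduces to proving the variational identity; this is the only real step. First, diagonalize $M=Q\Lambda Q^T$ with $Q$ orthogonal and $\Lambda=\operatorname{diag}(\lambda_1,\dots,\lambda_n)$ in nonincreasing order. The lower bound comes from taking $U$ to be the first $k$ columns of $Q$, which gives trace exactly $\sum_{i\le k}\lambda_i$.

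For the upper bound, set $W=Q^TU$, which again has orthonormal columns. Then
$$\operatorname{tr}(U^TMU)=\operatorname{tr}(W^T\Lambda W)=\sum_{j=1}^n \lambda_j w_j,$$
where $w_j=\sum_{l}W_{jl}^2$ is the squared norm of row $j$ of $W$. The weights satisfy $0\le w_j\le 1$ and $\sum_j w_j=k$:
\begin{itemize}
\item the bound $w_j\le 1$ holds because $WW^T$ is an orthogonal projection, so its diagonal entries lie in $[0,1]$;
\item the sum equals $\operatorname{tr}(W^TW)=k$.
\end{itemize}

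It remains to maximize $\sum_j\lambda_j w_j$ over weights satisfying these constraints. Since the $\lambda_j$ are sorted in nonincreasing order, the maximum is attained by putting weight $1$ on the first $k$ indices. Concretely,
$$\sum_{j\le k}\lambda_j-\sum_j\lambda_jw_j=\sum_{j\le k}(1-w_j)\lambda_j-\sum_{j>k}w_j\lambda_j\ge \lambda_k\Big(\sum_{j\le k}(1-w_j)-\sum_{j>k}w_j\Big)=0.$$
The inequality uses $1-w_j\ge 0$ with $\lambda_j\ge\lambda_k$ for $j\le k$, and $w_j\ge 0$ with $\lambda_j\le\lambda_k$ for $j>k$. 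The final expression vanishes because $\sum_j w_j=k$.

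The main point needing care is the bound $w_j\le1$, that is, that row norms of a matrix with orthonormal columns are at most $1$. We get this by extending $W$ to a full orthogonal matrix, whose rows are unit vectors.
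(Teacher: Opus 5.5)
Your proof is correct. The paper does not prove this statement: it quotes Ky Fan's 1949 inequality and uses it as a black box, for example in Corollary~\ref{weylcor}. Your route through the maximum principle $\sum_{i\le k}\lambda_i(M)=\max\{\operatorname{tr}(U^TMU): U^TU=I_k\}$ is the standard argument for that cited result. Your check of the identity, with weights $w_j=(WW^T)_{jj}\in[0,1]$ summing to $k$ followed by the comparison with $\lambda_k$, is complete. It needs no sign assumption on the eigenvalues, so it covers all real symmetric matrices, not only the positive semidefinite Laplacians the paper applies it to.
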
 

An immediate consequence of Theorem \ref{weyl} is the following corollary which will be used later.

\begin{corollary}\label{weylcor}
Let $G_1, \ldots, G_r$ be some edge disjoint graphs on $n$ vertices. Then, $$\mathcal{S}_k\left(G_1 \cup \cdots \cup G_r\right) \leq \sum_{i=1}^r \mathcal{S}_k\left(G_i\right)$$ for any $k$, $1 \leq k \leq n$.
\end{corollary}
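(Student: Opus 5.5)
Corollary~\ref{weylcor} follows by applying Theorem~\ref{weyl} to the Laplacian matrices. Regard each $G_i$ as a spanning subgraph on the common vertex set $[n]$, adding isolated vertices where necessary. This does not change the nonzero Laplacian eigenvalues, and it only appends zeros, which cannot increase $\mathcal{S}_k$. Since the $G_i$ are pairwise edge disjoint, the degree matrices add, $D(G_1\cup\cdots\cup G_r)=\sum_i D(G_i)$, and so do the adjacency matrices, $A(G_1\cup\cdots\cup G_r)=\sum_i A(G_i)$, with no entry counted twice. Hence
$$L(G_1\cup\cdots\cup G_r)=\sum_{i=1}^r L(G_i),$$
a sum of real symmetric $n\times n$ matrices.

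We then induct on $r$. The case $r=1$ is trivial. For the inductive step, set $A=L(G_1\cup\cdots\cup G_{r-1})$ and $B=L(G_r)$. Theorem~\ref{weyl} gives
$$\mathcal{S}_k(G_1\cup\cdots\cup G_r)\le \mathcal{S}_k(G_1\cup\cdots\cup G_{r-1})+\mathcal{S}_k(G_r),$$
and the induction hypothesis bounds the first term by $\sum_{i=1}^{r-1}\mathcal{S}_k(G_i)$.

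The only point needing care is the edge disjointness. Without it, a shared edge would appear with weight $2$ in $\sum_i L(G_i)$, and that sum would no longer be the Laplacian of the union. Alternatively, Theorem~\ref{weyl} itself can be justified through Ky Fan's variational principle,
$$\sum_{i=1}^k\lambda_i(M)=\max\{\operatorname{tr}(U^TMU):U^TU=I_k\}.$$
The maximum of a sum is at most the sum of the maxima, which yields the subadditivity directly.
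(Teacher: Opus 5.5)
Your proof is correct and is the argument the paper leaves implicit when it calls this an immediate consequence of Theorem~\ref{weyl}: edge-disjointness gives $L(G_1\cup\cdots\cup G_r)=\sum_{i=1}^r L(G_i)$, and applying Ky Fan's inequality inductively over $r$ gives the bound. The padding remark is unnecessary, since all $G_i$ are already on $n$ vertices; note also that appending zero eigenvalues leaves $\mathcal{S}_k$ unchanged, which is stronger than saying it cannot increase it.
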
 
The following lemma serves as a cornerstone to prove the main result of this section.
 
\begin{lemma}\label{countermat}
If Brouwer's conjecture is false for some $k$,  then there exists a counterexample graph $G$ such that for every subgraph $H$ of $G$ containing at least one edge, we have $S_k(H) > e(H)$.
\end{lemma}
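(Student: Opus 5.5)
Take a counterexample with the fewest edges and show it has the required property. Fix $k$ for which Brouwer's conjecture fails. Among all graphs $G$ (on any number of vertices) with $S_k(G) > e(G)+\binom{k+1}{2}$, choose one with $e(G)$ minimum. We claim every subgraph $H$ of $G$ with at least one edge satisfies $S_k(H) > e(H)$.

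We may assume $H$ is a spanning subgraph. Adding isolated vertices only appends zero Laplacian eigenvalues, so it changes neither $S_k(H)$ nor $e(H)$. Suppose, for contradiction, that $S_k(H)\le e(H)$ for some such $H$ with $e(H)\ge 1$. Let $G' = G - H$ be the graph on the same vertex set with the edges of $H$ removed. Then $G$ is the edge-disjoint union of $H$ and $G'$, so Corollary \ref{weylcor} gives
$$S_k(G)\le S_k(H)+S_k(G') \le e(H)+S_k(G').$$

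Since $e(G') = e(G)-e(H) < e(G)$, minimality of $G$ implies $G'$ is not a counterexample for this $k$. Hence $S_k(G')\le e(G')+\binom{k+1}{2}$. This also covers the case where $G'$ has no edges, since then $S_k(G')=0$. Combining,
$$S_k(G)\le e(H)+e(G')+\binom{k+1}{2} = e(G)+\binom{k+1}{2},$$
which contradicts the choice of $G$.

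The only points needing care are the following.
\begin{itemize}
\item The minimality must range over all graphs with the fixed $k$, so that $G'$ falls under the induction hypothesis.
\item If $G'$ has fewer than $k$ vertices, $S_k$ should be read as the sum of all eigenvalues. Alternatively, keep the vertex set fixed as $V(G)$, which avoids the issue entirely.
\end{itemize}
Neither point is a real obstacle; the argument is a direct application of Corollary \ref{weylcor}.
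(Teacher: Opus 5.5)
Your proof is correct and is essentially the paper's argument: take an edge-minimal counterexample for the fixed $k$, and pad $H$ with isolated vertices so that Corollary~\ref{weylcor} gives $S_k(G)\le S_k(H)+S_k(G-H)$. Then $S_k(H)\le e(H)$ forces $G-H$ to be a counterexample with fewer edges; you are slightly more explicit than the paper in noting that $e(H)\ge 1$ is exactly what guarantees $e(G-H)<e(G)$.
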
 
 
\begin{proof}
Let $G$ be a graph with $n$ vertices having the minimum number of edges and $$
e(G)+\binom{k+1}{2}<S_k(G),$$ for some $k$, $1\leq k\leq n$. 
Suppose that $G$ has a subgraph $H$ with $n_0$ vertices that satisfies $$S_k\left(H\right) \leq e\left(H\right).$$
By  Corollary \ref{weylcor}, we obtain  $$e(G)+\binom{k+1}{2}<S_k(G) \leq S_k\left(H\cup (n-n_0)K_1\right)+S_k\left(G-H \right)=S_k\left(H\right)+S_k\left(G-H \right),$$
where $tK_1$ denotes the disjoint union of 
$t$ isolated vertices. This implies that $$e(G-H)+\binom{k+1}{2}<S_k(G-H),$$ which contradicts the minimality of $e(G)$ and hence, the result follows. 	
\end{proof} 
Rocha and Trevisan \cite{Rocha2014} proved the following result.
\begin{theorem}{\rm (Rocha and Trevisan \cite{Rocha2014})}\label{rochatra}
Let $G$ be a graph with $n$ vertices, $e(G)$ edges, and girth $g$. Then 
$$\mathcal{S}_k(G)\leq e(G)+\binom{k+1}{2}$$ holds for all integers $k$ satisfying $1\leq k\leq \lfloor\frac{g}{5}\rfloor$, where $\lfloor . \rfloor$ denotes the greatest integer function.
\end{theorem}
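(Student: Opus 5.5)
We argue by minimal counterexample, in the spirit of Lemma~\ref{countermat}, but inside a class of graphs closed under edge deletion. Fix $k\geq 1$ and let $\mathcal{G}_k$ be the class of graphs whose girth is at least $5k$; acyclic graphs, with infinite girth, are included. Removing edges never shortens the shortest cycle, so $G\in\mathcal{G}_k$ implies $G-H\in\mathcal{G}_k$ for every subgraph $H$. Hence the proof of Lemma~\ref{countermat} goes through verbatim inside $\mathcal{G}_k$. Concretely, suppose the claimed inequality fails for some $G\in\mathcal{G}_k$, and choose such a $G$ with $e(G)$ minimal. If $G$ had a subgraph $H$ with an edge and $S_k(H)\leq e(H)$, Corollary~\ref{weylcor} would give
\[
e(G)+\binom{k+1}{2}<S_k(G)\leq S_k(H)+S_k(G-H)\leq e(H)+S_k(G-H).
\]
This yields $S_k(G-H)>e(G-H)+\binom{k+1}{2}$, with $G-H\in\mathcal{G}_k$ having fewer edges, contradicting minimality. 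So it suffices to exhibit, in every graph of $\mathcal{G}_k$ with an edge, either the conjecture directly or a subgraph $H$ with $S_k(H)\leq e(H)$.

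\emph{Case 1: $G$ is acyclic.} Then $G$ is a forest. The conjecture is known for trees \cite{Haemers2010}, and passing to forests is routine. If the components are $T_1,\dots,T_s$, the $k$ largest Laplacian eigenvalues of $G$ split as $k_j$ largest eigenvalues of $T_j$ with $\sum_j k_j=k$. Applying the tree bound to each component gives
\[
S_k(G)\leq \sum_j\Big(e(T_j)+\binom{k_j+1}{2}\Big)\leq e(G)+\binom{k+1}{2},
\]
because $\sum_j\binom{k_j+1}{2}\leq\binom{k+1}{2}$ by superadditivity of $x\mapsto\binom{x+1}{2}$.

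\emph{Case 2: $G$ contains a cycle.} Let $H=C_g$ be a shortest cycle, so $g\geq 5k$. Every Laplacian eigenvalue of $C_g$ has the form $2-2\cos(2\pi j/g)\leq 4$. Therefore
\[
S_k(C_g)\leq 4k< 5k\leq g=e(C_g),
\]
which is exactly the forbidden subgraph from the reduction, a contradiction. (This step only needs $g\geq 4k$, which foreshadows Chen's improvement to $\lfloor g/4\rfloor$.)

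The step I expect to need the most care is the reduction itself. Lemma~\ref{countermat} as stated minimizes over \emph{all} graphs, so its minimal counterexample need not have large girth. One must check that the minimality argument can be run inside $\mathcal{G}_k$, which holds because girth is monotone under edge deletion. The other point to verify is the forest case, namely how eigenvalues distribute among components together with the binomial superadditivity. Both are short, and everything else is the elementary eigenvalue bound for cycles.
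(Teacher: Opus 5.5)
Your proof is correct, and it takes a different route from the one the paper relies on. The paper does not prove this cited result itself. It presents it as a consequence of Theorem~\ref{bromatc} via $m(G)\geq\lfloor g/2\rfloor$, where the witness subgraph fed into Lemma~\ref{countermat} is a maximum matching $m(G)K_2$ (with $S_k=2k\leq m(G)$), rather than your shortest cycle (with $S_k\leq 4k\leq g$).

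The point you flagged as delicate is exactly where the two arguments differ. Lemma~\ref{countermat} only constrains an edge-minimal counterexample among \emph{all} graphs. To conclude anything about a given graph, the minimality must be run inside a class closed under edge deletion. Your class $\mathcal{G}_k$ is closed in this sense. The forest case also genuinely needs separate treatment: for a star, every subgraph $H$ with an edge has $S_k(H)>e(H)$, so no witness exists. You handle it correctly with the tree theorem and the superadditivity of $\binom{x+1}{2}$.

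By contrast, the class $\{G: m(G)\geq 2k\}$ is not closed under deleting $m(G)K_2$, since $G-m(G)K_2$ can have a much smaller matching number. So the paper's one-line deduction in Theorem~\ref{bromatc} does not by itself produce a contradiction for a given graph with $m(G)\geq 2k$. The same issue affects the paper's claim that Theorem~\ref{bromatc} implies the girth statement.

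What the matching route would buy, if completed, is the stronger range $k\leq\lfloor m(G)/2\rfloor$. Your route gives a complete argument for the girth statement and, as you observe, already reaches Chen's range $k\leq\lfloor g/4\rfloor$.
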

The above result was later improved by Chen \cite{Chen2018a} as stated below.

\begin{theorem} {\rm (Chen \cite{Chen2018a})}\label{cheng}
Let $G$ be a graph with $n$ vertices, $e(G)$ edges, and girth $g$. Then 
$$\mathcal{S}_k(G)\leq e(G)+\binom{k+1}{2}$$ holds for all integers $k$ satisfying $1\leq k\leq \lfloor \frac{g}{4}\rfloor$.
\end{theorem}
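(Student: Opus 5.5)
The plan is to argue by induction on $e(G)$, working inside the class of graphs of girth at least $4k$. The key observation is that removing edges never decreases the girth, so this class is closed under edge deletion. Fix $k$ and let $G$ have girth $g\geq 4k$.

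\textbf{Base case: $G$ acyclic.} Then $G$ is a forest. Brouwer's conjecture is known for trees \cite{Haemers2010}. For a forest, adding edges between components so as to obtain a tree does not decrease $S_k$, since Laplacian eigenvalues are monotone under edge addition. I expect this step to need some care: adding edges raises $e$, so this monotonicity alone does not yield the bound $e(G)+\binom{k+1}{2}$ for the forest itself. Instead, I would simply cite the known fact that forests satisfy the conjecture (it is proved in \cite{Haemers2010} for all forests). The cycle case below also covers any nonacyclic graph, so this citation is the only external input needed.

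\textbf{Inductive step: $G$ contains a cycle.} Then $G$ contains a cycle $C$ of length at least $g\geq 4k$. Taking alternate edges of $C$ gives a matching $M$ with $|M|=\lfloor |C|/2\rfloor\geq 2k$ edges. Keep exactly $2k$ of these edges and call the resulting matching $M$.

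As a graph on $n$ vertices, $M$ consists of $2k$ disjoint copies of $K_2$ together with isolated vertices. Its Laplacian spectrum is therefore $2$ with multiplicity $2k$ and $0$ otherwise. Hence
$$S_k(M)=2k=e(M).$$
By Corollary~\ref{weylcor},
$$S_k(G)\leq S_k(M)+S_k(G-M)=e(M)+S_k(G-M).$$

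The graph $G-M$ has fewer edges than $G$ and girth at least $g\geq 4k$. By the induction hypothesis,
$$S_k(G-M)\leq e(G-M)+\binom{k+1}{2}.$$
Combining the two displays gives
$$S_k(G)\leq e(M)+e(G-M)+\binom{k+1}{2}=e(G)+\binom{k+1}{2}.$$

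\textbf{Reformulation via Lemma~\ref{countermat}.} The same argument can be phrased as a minimal-counterexample argument. Suppose Brouwer's conjecture fails, and take a counterexample $G$ of girth at least $4k$ with $e(G)$ minimal within this class. Then $G$ is not a forest. Let $H$ be the $2k$-edge matching found above, so that $S_k(H)=e(H)$. The proof of Lemma~\ref{countermat} then shows that $G-H$, which still has girth at least $4k$, is a smaller counterexample. This contradicts minimality.

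More generally, the argument only uses the existence of a matching with $2k$ edges in every nonforest member of the class. It therefore also gives the bound for $k\leq\lfloor m(G)/2\rfloor$, provided one tracks the hereditary class appropriately.
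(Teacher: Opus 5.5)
Your proof is correct, and it differs from the paper's route at exactly the point that matters. The paper gives no direct proof of this statement. It cites Chen and then claims Theorem~\ref{bromatc} subsumes it via $m(G)\geq\lfloor g/2\rfloor$.

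The mechanism of Theorem~\ref{bromatc} is the same as yours: split off a $2k$-edge matching $M$ with $S_k(M)=e(M)$, apply Corollary~\ref{weylcor}, and appeal to minimality through Lemma~\ref{countermat}. But there the minimal counterexample is taken over \emph{all} graphs, so nothing forces it to have matching number at least $2k$. All that argument really shows is that a minimum-edge counterexample has $m(G)\leq 2k-1$. Minimizing inside $\{G: m(G)\geq 2k\}$ instead does not help, because deleting a matching can drop the matching number below $2k$.

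You supply the missing ingredient. The class of graphs of girth at least $4k$, forests included, is closed under edge deletion. Every non-forest in it has a cycle of length at least $4k$ and hence a $2k$-edge matching. The forests form a base case covered by the tree theorem. So your induction genuinely proves the statement, whereas the paper's derivation through Theorem~\ref{bromatc} does not. For the same reason, drop your closing remark that the argument extends to $k\leq\lfloor m(G)/2\rfloor$ by tracking a hereditary class: no edge-deletion-closed class is available there, and that is precisely the gap in the paper's proof of Theorem~\ref{bromatc}.

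Two small points.

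\begin{enumerate}
\item Haemers et al.\ prove the bound for trees, not forests, so add the one-line extension. If $F$ has components $T_1,\dots,T_c$, then $S_k(F)=\sum_i S_{k_i}(T_i)$ for some $k_1+\dots+k_c=k$ with $k_i\leq |V(T_i)|$. Hence $S_k(F)\leq e(F)+\sum_i\binom{k_i+1}{2}\leq e(F)+\binom{k+1}{2}$, by superadditivity of $\binom{x+1}{2}$.
\item If you feed in their stronger tree bound $S_k(T)\leq e(T)+2k-1$, the same induction gives $S_k(G)\leq e(G)+2k-1$ for every graph of girth at least $4k$. This is strictly stronger than the Brouwer bound once $k\geq 3$.
\end{enumerate}
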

 
It is easy to see that if $G$ is a graph with matching number $m(G)$ and girth $g$, then $m(G) \geq \lfloor g / 2 \rfloor$. Therefore, the following result is an  improvement of both Theorems \ref{rochatra} and \ref{cheng}.

\begin{theorem}\label{bromatc}
Let $G$ be a graph with $n$ vertices, $e(G)$ edges, and matching number $m(G)$. Then $$\mathcal{S}_k(G) \leq e(G)+\binom{k+1}{2}$$ holds for all integers $k$ satisfying  $1 \leq k \leq\lfloor \frac{m(G)} { 2}\rfloor$.
\end{theorem}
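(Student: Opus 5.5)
The plan is a minimal-counterexample argument in the spirit of Lemma~\ref{countermat}, with a matching playing the role of the subgraph $H$. The basic observation is this: if $M$ is a matching with $t$ edges, viewed as a spanning subgraph on $n$ vertices, then its Laplacian eigenvalues are $2$ with multiplicity $t$ and $0$ otherwise. Hence
$$S_k(M)=2\min\{k,t\}.$$
In particular, if $t\geq 2k$ then $S_k(M)=2k\leq t=e(M)$. So any graph with $m(G)\geq 2k$ contains a subgraph $H$ with at least one edge and $S_k(H)\leq e(H)$, namely a matching of size $2k$. This is exactly the situation Lemma~\ref{countermat} forbids for a minimal counterexample.

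Concretely, fix $k$ and suppose the statement fails. Among all graphs $G$ with $m(G)\geq 2k$ and $S_k(G)>e(G)+\binom{k+1}{2}$, choose one with $e(G)$ minimum. Let $H\subseteq G$ be a matching of size exactly $2k$, which exists since $m(G)\ge 2k$. As in the proof of Lemma~\ref{countermat}, Corollary~\ref{weylcor} gives
$$e(G)+\binom{k+1}{2}<S_k(G)\leq S_k(H)+S_k(G-H)\leq 2k+S_k(G-H).$$
Since $e(G-H)=e(G)-2k$, this yields $S_k(G-H)>e(G-H)+\binom{k+1}{2}$. So $G-H$ is a counterexample with fewer edges.

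The main obstacle is that $G-H$ need not satisfy $m(G-H)\geq 2k$, so minimality within the restricted class gives no contradiction. There are two ways I would try to close this gap.

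\begin{enumerate}
\item Choose the removed set more cleverly, so that it still satisfies $S_k(H')\le e(H')$ while $G-H'$ keeps a matching of size $2k$. For example, fix a maximum matching $M$ and look for $H'$ among the edges outside $M$: a second matching of size $\geq 2k$ disjoint from $M$, or a union of stars and small components whose $S_k$ is controlled. If $G-M$ has few edges, handle $G$ directly, since then $G$ is ``matching plus sparse''.
\item Iterate the peeling: write $G=H_1\cup\cdots\cup H_r\cup R$, where each $H_i$ is a matching of size exactly $2k$ and $R$ has $m(R)<2k$. Corollary~\ref{weylcor} then reduces the problem to proving $S_k(R)\le e(R)+\binom{k+1}{2}$ for the remainder $R$, since each $H_i$ contributes no excess.
\end{enumerate}

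For the remainder $R$ in the second route, I would use the structure forced by $m(R)<2k$. By the standard relation between matchings and covers, $R$ has a vertex cover of size at most $2m(R)<4k$. I would then try to bound $S_k(R)$ through the Grone--Merris--Bai Theorem~\ref{groneme} together with this small cover. The bound $k\le\lfloor m(G)/2\rfloor$ should enter exactly here: the slack $m(G)-2k$ lost from each peeled matching is what must absorb the excess of $R$. I expect this final accounting step to be the delicate part of the argument.
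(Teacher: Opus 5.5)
Your reduction is correct, and the obstacle you isolate is real. But the proposal stops exactly where the work begins, so it does not prove the statement. The paper's proof is just your first step. It exhibits $m(G)K_2\subseteq G$ with $\mathcal{S}_k(m(G)K_2)=2k\le e(m(G)K_2)$ and says the result ``follows by Lemma~\ref{countermat}''. That lemma only says that a \emph{minimum-edge} counterexample for $k$ has no nonempty subgraph $H$ with $S_k(H)\le e(H)$. It says nothing about an arbitrary $G$ with $m(G)\ge 2k$, so the paper makes precisely the inference you rightly refused to make. What the peeling argument actually yields is only a reduction: for fixed $k$, Brouwer's inequality holds for all graphs if and only if it holds for all graphs with matching number less than $2k$.

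Neither of your repairs closes the gap.
\begin{itemize}
\item \emph{Route 2 has no slack.} A matching with exactly $2k$ edges has $S_k(H_i)=e(H_i)$, so the peeled pieces contribute nothing. Peeling one maximum matching instead gives slack $m(G)-2k$, which is $0$ when $m(G)=2k$. The remainder must therefore satisfy $S_k(R)\le e(R)+\binom{k+1}{2}$ on its own. Since you use nothing about $R$ beyond $m(R)<2k$, by the reduction above this is Brouwer's conjecture for that $k$ in full, so the route is circular.
\item \emph{Your tools for $R$ are too weak.} Inequality (\ref{tauineq}) with $\tau(R)\le 2m(R)\le 4k-2$ gives only $e(R)+k(4k-2)$, which already exceeds $e(R)+\binom{k+1}{2}$. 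Theorem~\ref{groneme} gives $\sum_j\min\{d_j,k\}$. For $R=(2k-1)K_{1,N}$ with $N\ge k$ this equals $e(R)+k(2k-1)$, which is larger than $e(R)+\binom{k+1}{2}$ once $k\ge 2$.
\item \emph{Route 1 has no argument behind it, and its premise can fail.} In $P_4\cup(2k-2)K_2$ the $2k$-matching is unique. The only nonempty edge set whose deletion leaves a $2k$-matching is the middle edge of $P_4$, and it has $S_k=2>1$.
\end{itemize}

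The size of the missing piece shows up in a simple embedding. For any graph $R$, the graph $G=R\cup 2kK_2$ (on new vertices) has $m(G)\ge 2k$, $e(G)=e(R)+2k$ and $S_k(G)\ge S_k(R)$. So the statement implies $S_k(R)\le e(R)+\binom{k+1}{2}+2k$ for \emph{every} graph $R$. Nothing in Corollary~\ref{weylcor} or the spectrum of a matching gives information of this kind about an arbitrary $R$. A genuine upper bound on $S_k$ for graphs of small matching number is needed, and neither your proposal nor the paper supplies one.
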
 
\begin{proof}
Since $G$ is a graph with matching number $m(G)$, $m(G)K_2$ is a subgraph of $G$. Now, one can easily see that for all integers 	$k$ satisfying, $1 \leq k \leq\lfloor \frac{m(G)} {2} \rfloor$,  
$$\mathcal{S}_k(m(G)K_2)=2k\leq e(m(G)K_2).$$ Hence the result follows by Lemma \ref{countermat}.
\end{proof}
An immediate consequence of Theorem \ref{bromatc} is the following result.
\begin{corollary}
If $G$ is a graph on $n$ vertices with a perfect matching, then $$\mathcal{S}_k(G) \leq e(G)+\binom{k+1}{2}$$ holds for all integers $k$ satisfying $1\leq k \leq\lfloor\frac{n}{4}\rfloor$.
\end{corollary}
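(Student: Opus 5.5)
The corollary follows from Theorem~\ref{bromatc} once we compute the matching number. If $G$ has a perfect matching, then $n$ is even and the perfect matching consists of $n/2$ pairwise disjoint edges covering every vertex. No matching can have more than $\lfloor n/2\rfloor$ edges, since each edge uses two distinct vertices. Hence $m(G)=\frac{n}{2}$.

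Substituting into the range of Theorem~\ref{bromatc} gives
\[
\left\lfloor \frac{m(G)}{2}\right\rfloor=\left\lfloor \frac{n/2}{2}\right\rfloor=\left\lfloor \frac{n}{4}\right\rfloor .
\]
So Theorem~\ref{bromatc} yields $\mathcal{S}_k(G)\leq e(G)+\binom{k+1}{2}$ for every integer $k$ with $1\leq k\leq \lfloor n/4\rfloor$.

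There is no real obstacle. The only point to check is that the floor arithmetic matches: for $n$ even, $\lfloor (n/2)/2\rfloor=\lfloor n/4\rfloor$, which is immediate. If $n<4$ the range of $k$ is empty and the statement holds vacuously.
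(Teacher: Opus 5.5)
Your argument is exactly the paper's: a perfect matching gives $m(G)=n/2$, so $\lfloor m(G)/2\rfloor=\lfloor n/4\rfloor$, and Theorem~\ref{bromatc} applies verbatim. Be aware that the corollary is only as solid as Theorem~\ref{bromatc}, and that theorem's proof via Lemma~\ref{countermat} has a gap: the lemma only constrains a \emph{minimum-edge} counterexample, and while deleting a $2k$-matching $H$ from a counterexample $G$ with $m(G)\ge 2k$ does give a counterexample $G-H$ with fewer edges, $G-H$ may have matching number below $2k$, so no contradiction results (the girth arguments of Rocha--Trevisan and Chen work because girth cannot decrease under edge deletion, whereas the matching number can).
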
 
 
In \cite{Wang2012}, Wang, Huang, and Liu proved the following result related to the Brouwer's conjecture.
\begin{lemma} {\rm (Wang, Huang, and Liu \cite{Wang2012})}\label{wangk}
If $G$ is a connected graph with $n$ vertices and $e(G)$ edges, then for each integer $k$ satisfying $$\frac{3n-4+\sqrt{8n^2(e(G)-n+1)+(n-4)^2}}{2n}\leq k\leq n,$$ the following inequality holds:
$$\mathcal{S}_k(G)\leq e(G)+\binom{k+1}{2}.$$	
\end{lemma}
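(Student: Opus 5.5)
The plan is to split $G$ into a spanning tree and the remaining edges, apply Corollary~\ref{weylcor}, and reduce the claim to a quadratic inequality in $k$. The threshold in the statement should be exactly the larger root of that quadratic.

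Since $G$ is connected, it has a spanning tree $T$ with $n-1$ edges. Write $G=T\cup(G-T)$, where $G-T$ is the spanning subgraph on the remaining $e(G)-n+1$ edges. By Corollary~\ref{weylcor},
$$S_k(G)\le S_k(T)+S_k(G-T).$$
For the second term we use only the trivial estimate $S_k(G-T)\le \sum_i\mu_i(G-T)=2(e(G)-n+1)$. For the tree term we invoke the known sharp bound for trees of Fritscher, Hoppen, Rocha and Trevisan,
$$S_k(T)\le n-2+2k-\frac{2(k-1)}{n},$$
which is the main external ingredient and the step I expect to carry all the real difficulty. If it had to be proved here, I would proceed as follows. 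First I would show that the tree has a matching-free edge decomposition into stars. I would then bound the $k$ largest eigenvalues via Corollary~\ref{weylcor} applied to these stars, with a careful extremal analysis to extract the correction $-2(k-1)/n$. I would simply cite it.

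Combining the two estimates gives
$$S_k(G)\le 2e(G)-n+2k-\frac{2(k-1)}{n}.$$
It then suffices to check that this is at most $e(G)+\binom{k+1}{2}$. Multiplying by $2n$ and rearranging, this is equivalent to
$$nk^2-(3n-4)k+2n^2-2ne(G)-4\ \ge 0.$$
The discriminant is $(3n-4)^2-4n(2n^2-2ne(G)-4)$. Using $(3n-4)^2-(n-4)^2=8n^2-16n$, it simplifies to $8n^2(e(G)-n+1)+(n-4)^2\ge 0$. Hence the quadratic is nonnegative whenever $k$ is at least its larger root
$$\frac{3n-4+\sqrt{8n^2(e(G)-n+1)+(n-4)^2}}{2n}.$$
This is exactly the hypothesis, so the inequality holds for all such $k\le n$, which proves the lemma. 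Apart from the tree bound, the only care needed is in the algebra identifying the root with the stated threshold.
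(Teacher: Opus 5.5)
Your proof is correct. The paper gives no proof of this lemma; it only quotes it from Wang, Huang and Liu. Your reconstruction takes a spanning tree $T$ plus the $e(G)-n+1$ remaining edges and combines Corollary~\ref{weylcor}, the Fritscher--Hoppen--Rocha--Trevisan bound $S_k(T)\le n-2+2k-\frac{2(k-1)}{n}$, and the trivial estimate $S_k(G-T)\le 2(e(G)-n+1)$. This yields exactly the quadratic $nk^2-(3n-4)k+2n^2-2ne(G)-4\ge 0$, whose discriminant is $8n^2(e(G)-n+1)+(n-4)^2\ge 0$, so the stated threshold is precisely its larger root. That exact match strongly suggests this is the cited source's own argument.

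The one thing to drop is your aside on re-deriving the tree bound. Fan's inequality applied to a star decomposition $K_{1,s_1},\dots,K_{1,s_r}$ gives only $S_k(T)\le e(T)+\sum_i\min(k,s_i)$. That cannot produce the $-\frac{2(k-1)}{n}$ correction, so simply citing the tree bound, as you do, is the right call.
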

Our next result shows that Brouwer's conjecture holds whenever the graph has a large matching compared with the number of cycles. 

\begin{theorem} \label{angtah}
Let $G$ be a connected graph with $n$ vertices and $e(G)$ edges. If
$$m(G)\geq 3+\sqrt{8(e(G)-n+1)+1},$$
then $$\mathcal{S}_k(G)\leq e(G)+\binom{k+1}{2}$$ holds for all integers $k$
satisfying $1\leq k\leq n$. 
\end{theorem}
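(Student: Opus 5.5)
The plan is to cover the full range $1\le k\le n$ by two earlier results whose ranges overlap. Theorem~\ref{bromatc} handles $1\le k\le \lfloor m(G)/2\rfloor$, and Lemma~\ref{wangk} handles all $k$ with
$$k\ \ge\ \frac{3n-4+\sqrt{8n^2(e(G)-n+1)+(n-4)^2}}{2n}.$$
Connectivity of $G$ is assumed, so Lemma~\ref{wangk} applies. It therefore suffices to show that, under the hypothesis $m(G)\ge 3+\sqrt{8(e(G)-n+1)+1}$, every integer $k$ lies in one of the two ranges. Equivalently, we want the Wang--Huang--Liu threshold to be at most $\lfloor m(G)/2\rfloor+1$.

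Write $c=e(G)-n+1\ge 0$ for the cyclomatic number. First we bound the threshold by a quantity free of $n$. Since $(n-4)^2\le n^2$, we have $\sqrt{8n^2c+(n-4)^2}\le n\sqrt{8c+1}$. Also $(3n-4)/(2n)<3/2$. Hence the threshold is less than
$$\frac{3+\sqrt{8c+1}}{2}\le \frac{m(G)}{2},$$
using the hypothesis in the last step. So every integer $k$ with $k\ge m(G)/2$ satisfies the condition of Lemma~\ref{wangk}, and the conjecture holds for those $k$. (If $n<4$ the estimate $(n-4)^2\le n^2$ still holds; tiny cases are trivial anyway, since $m(G)\ge 4$ forces $n\ge 8$.)

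It remains to check that no integer falls into the gap. The integers with $k<m(G)/2$ satisfy $k\le \lceil m(G)/2\rceil-1\le\lfloor m(G)/2\rfloor$, so they are covered by Theorem~\ref{bromatc}. The integers with $k\ge m(G)/2$ are covered by the previous paragraph. Thus all $1\le k\le n$ are covered. The only delicate point is this floor/ceiling bookkeeping at the boundary $k\approx m(G)/2$. The strict inequality coming from $(3n-4)/(2n)<3/2$, together with the fact that for odd $m(G)$ the value $\lceil m(G)/2\rceil$ already exceeds $m(G)/2$, makes the two ranges overlap with no gap, so I expect no real obstacle beyond this arithmetic.
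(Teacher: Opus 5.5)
Your proposal is correct and follows the paper's proof: split the range at $\lfloor m(G)/2\rfloor$, use Theorem~\ref{bromatc} below that point and Lemma~\ref{wangk} above it. Your estimates $\sqrt{8n^2c+(n-4)^2}\le n\sqrt{8c+1}$ and $(3n-4)/(2n)<3/2$ supply the verification that the paper calls ``easily verified.'' (One small correction: $(n-4)^2\le n^2$ requires $n\ge 2$ and fails for $n=1$, so it does not hold for all $n<4$. This does not matter, because the hypothesis forces $m(G)\ge 4$ and hence $n\ge 8$.)
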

\begin{proof}
For any integer $k$ such that $1 \leq k \leq \left\lfloor \frac{m(G)}{2} \right\rfloor$, it follows directly from Theorem~\ref{bromatc} that
$$\mathcal{S}_k(G) \leq e(G) + \binom{k+1}{2}.$$
On the other hand, for $\left\lfloor \frac{m(G)}{2} \right\rfloor < k \leq n$, under the condition
$$m(G) \geq 3 + \sqrt{8(e(G) - n + 1) + 1},$$
it can be easily verified that
$$n \geq k \geq \left\lceil \frac{m(G)}{2} \right\rceil \geq \frac{m(G)}{2} > \frac{3n - 4 + \sqrt{8n^2(e(G) - n + 1) + (n - 4)^2}}{2n},$$ where $ \lceil . \rceil$ denotes the least integer function.
Thus, if $G$ is a connected graph with $n$ vertices and $e(G)$, then by using Lemma \ref{wangk}, we have
$$\mathcal{S}_k(G)\leq e(G)+\binom{k+1}{2},$$
for all integers $k$ satisfying $1\leq k\leq n$. This proves the result. 	
\end{proof}

\begin{remark}
Theorem \ref{angtah} implies that Brouwer's conjecture is valid whenever the matching number of the graph is sufficiently large relative to the number of its cycles. It represents an improvement over Theorem \ref{rochatra} by Rocha and Trevisan \cite{Rocha2014}, and Theorem \ref{cheng} by Chen \cite{Chen2018a}. Furthermore, it is known that a connected graph with $n$ vertices and $c$ cycles (a so-called {$c$-cyclic graph}) contains $e(G) = n - 1 + c$ edges. Under this context, we get $$m(G) \geq 3 + \sqrt{8c + 1}.$$
This highlights the fact that for Brouwer's conjecture to hold, the matching number $m(G)$ must increase as the number of cycles $c$ increases. For instance, Brouwer’s conjecture holds for unicyclic graphs with matching number at least $6$; bicyclic and tricyclic graphs with matching number at least $8$; tetracyclic graphs with matching number at least $9$, and so on. 	
\end{remark} 

\section{Brouwer's conjecture for $k=3$}\label{sec3}
Recently, in \cite{Chen2018a}, Chen proved the following results for split graphs.
\begin{lemma} {\em (Chen \cite{Chen2018a})} \label{splitl} For a split graph $S_{n,n_1}$ and for any $1\leq k\leq n$,
	$$\mathcal{S}_k\left(S_{n, n_1}\right)\leq kn_1+e\left(S_{n,n_1}\right)-\frac{n_1\left(n_1-1\right)}{2},$$
	with equality if $1\leq k\leq n_1-1$ and $S_{n, n_1}\cong KS_{n,n_1}$, or if $n_1\leq k\leq n-1$ and $S_{n, n_1}\cong CS_{n,n_1}$.
\end{lemma}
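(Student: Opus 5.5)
The plan is to decompose the split graph into a clique and a bipartite remainder, bound each piece with Corollary \ref{weylcor}, and then check when equality is attained. Write $V=V_1\cup V_2$ with $V_1$ a clique of order $n_1$ and $V_2$ independent. Let $K$ be the spanning subgraph consisting of the clique on $V_1$, that is, $K_{n_1}\cup (n-n_1)K_1$. Let $B=S_{n,n_1}-K$ be the spanning bipartite subgraph formed by the edges between $V_1$ and $V_2$. These two graphs are edge disjoint, so by Corollary \ref{weylcor}
$$\mathcal{S}_k(S_{n,n_1})\leq \mathcal{S}_k(K)+\mathcal{S}_k(B).$$

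For the first term: the Laplacian spectrum of $K$ is $n_1$ with multiplicity $n_1-1$, and $0$ otherwise. Hence $\mathcal{S}_k(K)=k n_1$ for $k\le n_1-1$, and $\mathcal{S}_k(K)=n_1(n_1-1)$ for $k\ge n_1$.

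For the second term, I would use two bounds. The trivial one is $\mathcal{S}_k(B)\leq 2e(B)$. The sharper one uses that the nonzero Laplacian eigenvalues of a bipartite graph with parts $V_1,V_2$ come from $L$ restricted appropriately. Concretely, apply the Grone--Merris--Bai theorem (Theorem \ref{groneme}) to $B$. Every vertex of $V_2$ has $B$-degree at most $n_1$, so $d_i^*(B)\le n_1$ for $i\ge 2$. Summing conjugates also gives $\sum_{i=1}^k d_i^*(B)\le \sum_{v\in V_1}\min(d_v(B),k)+\sum_{v\in V_2}\min(d_v(B),k)$, which is of the form $e(B)+(\text{at most }k n_1)$-type quantities.

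The cleanest route seems to be the following. Write $\sum_{i\le k} d_i^* = \sum_v \min(d_v,k)$. Split this over $V_1$, where each term is at most $k$, giving a total of at most $kn_1$ in the degree-in-$S$ count. Over $V_2$ the terms sum to at most $e(B)$. Applying this directly to $S_{n,n_1}$ via Theorem \ref{groneme} gives
$$\mathcal{S}_k(S_{n,n_1})\le \sum_{v\in V_1}\min(d_v,k)+\sum_{v\in V_2}d_v\le kn_1+e(B)=kn_1+e(S_{n,n_1})-\tfrac{n_1(n_1-1)}{2}.$$
This is exactly the claimed bound, so I would actually prefer this one-line Grone--Merris--Bai argument over the decomposition.

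For the equality cases, I would compute directly. $KS_{n,n_1}$ (presumably the clique with pendant structure where $V_2$ is joined suitably) and $CS_{n,n_1}$ are threshold graphs. Their Laplacian eigenvalues are therefore the conjugate degrees, so equality in Grone--Merris--Bai holds automatically. It then remains to check when $\sum_{v\in V_1}\min(d_v,k)=kn_1$ and $\sum_{v\in V_2}\min(d_v,k)=e(B)$.

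For $CS_{n,n_1}$ with $k\ge n_1$, each $V_2$ vertex has degree $n_1\le k$ and each $V_1$ vertex has degree at least $k$ when $k\le n-1$, so equality holds. For $k\le n_1-1$, the analogous check on $KS_{n,n_1}$ uses its defining degrees. The main obstacle is pinning down the intended definition of $KS_{n,n_1}$, since it is not defined in the text; I would take it to be the threshold split graph making $V_2$ degrees at most $k$.
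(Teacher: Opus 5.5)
The paper gives no proof of this lemma; it is quoted from Chen, so there is no in-paper argument to compare yours with. Your final Grone--Merris--Bai argument (Theorem~\ref{groneme}) is a correct and complete proof of the inequality. By double counting, $\sum_{i=1}^k d_i^*(G)=\sum_{v}\min(d_v,k)$. The terms over $V_1$ contribute at most $kn_1$. Since $V_2$ is independent, the terms over $V_2$ contribute at most $\sum_{v\in V_2}d_v=e(S_{n,n_1})-\binom{n_1}{2}$.

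The argument uses only that $V_1$ is a clique and $V_2$ is independent, so it holds for every split partition, not just one with $n_1$ minimal. Keep that explicit: the extremal graph for $k\le n_1-1$ (see below) does not satisfy the minimality convention stated in the introduction. The inequality is also all the paper uses later (Case 2 of the proof of Theorem~\ref{rochatr}). Your equality check for $CS_{n,n_1}=T(0^{n-n_1}1^{n_1})$ is correct.

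\textbf{The $KS_{n,n_1}$ equality case is left open.} Your proposed reading, a threshold graph whose $V_2$-degrees are at most $k$, depends on $k$. The statement asks for a single graph giving equality for every $1\le k\le n_1-1$. By your own criterion (threshold, $V_1$-degrees at least $k$, $V_2$-degrees at most $k$), this forces all $V_2$-degrees to be at most $1$; any threshold split graph with that property works.

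The graph meant here is $KS_{n,n_1}=K_{n_1}\cup(n-n_1)K_1=T(0\,1^{n_1-1}0^{n-n_1})$. It is the partner of $CS_{n,n_1}$ among the extremal graphs for Brouwer's bound: for $n_1=k+1$ it is $K_{k+1}\cup(n-k-1)K_1$. Its Laplacian spectrum is $n_1$ with multiplicity $n_1-1$ and $0$ otherwise, so $\mathcal{S}_k=kn_1$. Since $e(KS_{n,n_1})-\binom{n_1}{2}=0$, equality holds for all $k\le n_1-1$.

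\textbf{Remove the exploratory material in your second and third paragraphs.} The claim $d_i^*(B)\le n_1$ for $i\ge 2$ is false: for $B=K_{2,3}$ one has $d_2^*(B)=5>2$. The split into $K$ and $B$ via Corollary~\ref{weylcor} could never give the bound. For $k\le n_1-1$ it would require $\mathcal{S}_k(B)\le e(B)$, which already fails when $B$ is a single edge ($\mathcal{S}_1=2>1$).
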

Next, let us state the well-known Kőnig-Egerváry theorem, which is central to the matching theory of bipartite graphs.

\begin{lemma} \label{mat=tau}
If $G$ is a bipartite graph with the matching number $m(G)$ and covering number $\tau(G)$, then $m(G)=\tau(G)$.
\end{lemma}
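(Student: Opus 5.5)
The statement just given is the Kőnig–Egerváry theorem: for a bipartite graph $G$, the matching number equals the covering number. The plan is to prove the two inequalities $m(G)\leq\tau(G)$ and $\tau(G)\leq m(G)$ separately.

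The inequality $m(G)\leq \tau(G)$ holds for every graph. If $M$ is a maximum matching and $C$ is any vertex cover, each edge of $M$ has an endpoint in $C$. Because the edges of $M$ are vertex-disjoint, these endpoints are distinct. Hence $|C|\geq |M|=m(G)$, and minimizing over $C$ gives $\tau(G)\geq m(G)$.

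For the reverse inequality, let $G$ have bipartition $(X,Y)$ and fix a maximum matching $M$. Let $U\subseteq X$ be the set of $M$-unsaturated vertices of $X$. Let $Z$ be the set of vertices reachable from $U$ by $M$-alternating paths, meaning paths that start in $U$ and alternate between edges not in $M$ and edges in $M$. Put $S=Z\cap X$ and $T=Z\cap Y$, and define the candidate cover
\[
C=(X\setminus S)\cup T .
\]
The key steps, in order, are as follows.
\begin{enumerate}
\item Every vertex of $T$ is $M$-saturated. Otherwise an alternating path from $U$ to that vertex would be $M$-augmenting, which contradicts the maximality of $M$ (Berge's argument).
\item $N_G(S)\subseteq T$. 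From a vertex of $S$, any non-matching edge extends the alternating path. The only way to reach a vertex of $S\setminus U$ is through its matching edge, whose other end lies in $T$ already.
\item The matching edges pair $T$ bijectively with $S\setminus U$. Consequently every vertex of $C$ is covered by $M$, and no edge of $M$ has both endpoints in $C$. This gives $|C|=|M|$.
\item $C$ is a vertex cover. An edge $xy$ with $x\in X$ is uncovered only if $x\in S$ and $y\notin T$, which step 2 rules out.
\end{enumerate}
Together these give $\tau(G)\leq |C|=m(G)$.

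I expect the main obstacle to be the counting in step 3. One must check that no matching edge joins $X\setminus S$ to $T$. This holds because the $M$-partner of a vertex in $T$ is reached along that same alternating path, and so lies in $S$. As an alternative, one could derive the result from Hall's theorem via the deficiency version, or from max-flow min-cut. The alternating-path argument, however, is self-contained and needs nothing beyond the definitions in Section~\ref{sec1}. Since the paper cites the statement as well known, it would also suffice to refer to a standard text.
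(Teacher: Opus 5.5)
Your proof is correct. It is the standard alternating-path argument, and steps 1--4 all go through, including your check that the $M$-partner of each vertex of $T$ lies in $S$, so that no matching edge has both ends in $C$.

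The paper gives no proof at all. It simply quotes the lemma as the classical K\H{o}nig--Egerv\'ary theorem, so citing a standard text is exactly what the paper does.

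What your self-contained route adds is clarity about what is actually needed. The paper only ever uses the nontrivial direction $\tau(G)\le m(G)$, because $\tau(G)$ enters only through the upper bound $\mathcal{S}_k(G)\le e(G)+k\tau(G)$. It uses this direction in two places:
\begin{enumerate}
\item Case~2 of Theorem~\ref{rochatr}, to get $e(G)+3\tau(G)\le e(G)+6$ when $m(G)=2$;
\item Lemma~\ref{my2}, to pass from $n\ge 2m(G)+p$ to $n\ge 2\tau(G)+p$.
\end{enumerate}
Your easy half $m(G)\le\tau(G)$ is not needed in either place.

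Your construction also shows exactly how the bipartite case improves the general bound $\tau(G)\le 2m(G)$ of Lemma~\ref{mat and tau}. That proof uses both endpoints of every edge of a maximum matching as the cover. Your set $C=(X\setminus S)\cup T$ keeps exactly one endpoint per matching edge: the $Y$-end when the $X$-end lies in $S$, and the $X$-end otherwise. Bipartiteness together with the absence of $M$-augmenting paths is what makes this halved set still a cover.
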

The next result demonstrates that Brouwer’s conjecture holds for $k=3$ for all graphs except those whose matching number is $3$, $4$, or $5$.

\begin{theorem}\label{rochatr}
	Let $G$ be a graph with $n$ vertices and matching number $m(G)$ such that $m(G)\notin \{3,4,5\}$. Then, $$\mathcal{S}_3(G)\leq e(G)+6.$$	
\end{theorem}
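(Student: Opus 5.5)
The plan is to split according to the value of $m(G)$. The range $m(G)\geq 6$ is immediate: then $\lfloor m(G)/2\rfloor\geq 3$, so Theorem~\ref{bromatc} applied with $k=3$ gives $\mathcal{S}_3(G)\leq e(G)+6$. Since the excluded values are $3,4,5$, what remains is $m(G)\in\{0,1,2\}$. There the matching is too small for Lemma~\ref{countermat} to help, so I would instead use the structure of graphs with a tiny matching number together with Lemma~\ref{splitl}. We may discard isolated vertices, since they only contribute zero eigenvalues.

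The main tool for the small cases is monotonicity. Adding an edge to a graph adds a positive semidefinite rank-one matrix to the Laplacian, so $\mathcal{S}_3$ does not decrease; formally this is Theorem~\ref{weyl} with $B$ the Laplacian of the added edges. Now suppose $G$ has a vertex cover $C$ with $|C|=t$. Completing $C$ to a clique produces a split graph $H=S_{n,n_1}$ with $n_1\leq t$ and $e(H)\leq e(G)+\binom{t}{2}-e(G[C])$. Lemma~\ref{splitl} then gives
$$\mathcal{S}_3(G)\leq \mathcal{S}_3(H)\leq 3n_1+e(H)-\tfrac{n_1(n_1-1)}{2}\leq e(G)+3t,$$
and in fact slightly better, e.g. $e(G)+3t-e(G[C])$. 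For $t\leq 2$ this already gives $e(G)+6$.

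Next I would classify the small cases, which is elementary in the spirit of Gallai--Edmonds.
\begin{itemize}
\item If $m(G)=0$, the graph is empty and the bound is trivial.
\item If $m(G)=1$, the edges form either a star $K_{1,r}$ or a triangle. These give $\mathcal{S}_3=r+3\leq r+6$ and $\mathcal{S}_3=6\leq 9$ respectively.
\item If $m(G)=2$, either $G$ has a vertex cover of size at most $2$, which is handled by the display above with $t\leq 2$, or the nontrivial part of $G$ is one of a short explicit list of exceptions.
\end{itemize}
The exceptions for $m(G)=2$ are a star plus a disjoint triangle, two disjoint triangles, a subgraph of $K_5$, and graphs in which a single vertex $v$ covers everything except a triangle or a small dense piece on at most four further vertices (for instance $K_4$ with pendant edges at one vertex). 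For these I would either compute $\mathcal{S}_3$ directly, using eigenvalues of disjoint unions and the known spectra of $K_5$ and $K_4$, or decompose $G$ via Corollary~\ref{weylcor} into a star at $v$ plus a subgraph on at most $4$ vertices. For the latter piece, Brouwer's conjecture is known (all graphs up to $10$ vertices), and the two bounds add up to what is needed. For example, $K_5$ gives $\mathcal{S}_3=15\leq 10+6$.

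I expect the main obstacle to be the case $m(G)=2$ with $\tau(G)\geq 3$. Here the crude split-graph bound $e(G)+3t-e(G[C])$ is not sharp enough, and one must be careful that the list of structures is complete. I would first establish that list with Tutte--Berge: a graph with $m(G)=2$ has a set $S$ with $|S|\leq 2$ such that the odd components of $G-S$ account for the deficiency. I would then check each structure, decomposing with Corollary~\ref{weylcor} into a star plus a graph on at most $5$ vertices whenever direct computation is awkward.
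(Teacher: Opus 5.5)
Your handling of $m(G)\ge 6$ via Theorem \ref{bromatc}, of $m(G)\le 1$, and of $m(G)=2$ with $\tau(G)\le 2$ is fine. For the last case, your split-completion bound does the job of the paper's decomposition into stars at a minimum vertex cover, which gives $\mathcal{S}_k(G)\le e(G)+k\tau(G)$.

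The gap is in the case you flag, $m(G)=2$ with $\tau(G)\ge 3$, which is the only remaining case of unbounded order. Carrying out your Tutte--Berge analysis:
\begin{itemize}
\item a minimizing $S$ with $|S|=2$ is a vertex cover;
\item $S=\emptyset$ leaves a nontrivial part on at most $6$ vertices;
\item $S=\{v\}$ forces $G-v$ to have exactly one nontrivial component, of order $2$ or $3$, and it must be a triangle when $\tau(G)\ge 3$.
\end{itemize}
So the only new family is a triangle $xyz$ together with a vertex $v$ joined to $j\in\{0,1,2,3\}$ of $x,y,z$ and to $r$ pendant vertices. Your ``small dense piece on at most four further vertices'' should be replaced by exactly this.

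For this family, the argument you state does not close. Since $\mathcal{S}_3(K_{1,s})=s+3$ for $s\ge 3$, the star at $v$ already uses $+3$ of the $+6$ budget. Bounding the remaining piece by Brouwer's conjecture adds another $+6$, so you only get $\mathcal{S}_3(G)\le e(G)+9$. What you actually need is $\mathcal{S}_3(\text{piece})\le e(\text{piece})+3$, and this fails for general small pieces: $\mathcal{S}_3(K_4)=12>6+3$ and $\mathcal{S}_3(K_4-e)=10>5+3$. In particular, if the star takes only the pendant edges, the remainder is $K_4-e$ for $j=2$ and $K_4$ for $j=3$, and the estimate breaks.

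The repair is to put \emph{all} edges at $v$ into the star, so that the remainder is exactly the triangle, and then use $\mathcal{S}_3(K_3)=6=e(K_3)+3$. Corollary \ref{weylcor} then gives $\mathcal{S}_3(G)\le (r+j+3)+6=e(G)+6$.

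This step is genuinely needed for $j=1$:
\begin{itemize}
\item For $j\ge 2$, your split bound already works. Take $C$ to be $v$ and two of its triangle neighbours; then $G[C]=K_3$, $G$ is itself a split graph with $n_1\le 3$, and $e(G)+3t-e(G[C])=e(G)+6$.
\item For $j=0$, $G$ is a disjoint union and can be computed directly.
\item For $j=1$, every $3$-vertex cover spans at most two edges, so the split-completion bound gives only $e(G)+7$.
\end{itemize}
Alternatively, the $j=1$ graph is unicyclic, so you could cite the known result for unicyclic graphs, as the paper does for its graph $G_1$.

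With this repair your route is complete. Your Tutte--Berge classification is in fact more systematic than the paper's triangle-based case split, which does not address $C_5$, $2K_3$, $K_3\cup K_{1,r}$, or $5$-vertex graphs such as $K_5$.

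A minor point: Theorem \ref{weyl} bounds $\mathcal{S}_k(A+B)$ from above. The monotonicity you use must instead be obtained by applying it to $A=(A+B)+(-B)$ with $-B$ negative semidefinite, or directly from Weyl's inequalities.
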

\begin{proof}
	We divide the proof into the following cases.
	\begin{description}
		\item[Case 1.]  If $m(G)=1$, then it is easy to see that either $G=K_{1, n-1}$ or $G=K_3$. In that case, we have  $$\mathcal{S}_3\left(K_{1, n-1}\right)=e\left(K_{1, n-1}\right)+3<e\left(K_{1,n-1}\right)+6,$$ and $$ \mathcal{S}_3\left(K_3\right)=e\left(K_3\right)+3<e\left(K_3\right)+6.$$  Thus, the assertion holds.
		\item[Case 2.]  Let $m(G)=2$. Let $V_\mathcal{T}(G)=\{1,\ldots,{\tau(G)}\}$ be a minimum vertex cover of a graph $G$ such that $\tau(G)=|V_\mathcal{T}(G)|$. Define the spanning subgraphs $G_1,\ldots, G_{\tau(G)}$ of $G$ such that their edge sets are disjoint and collectively form the edge set of $G$ in the following way: 
		$$V(G_i)=V(G),i=1,\ldots,\tau(G),$$ 
		$$E(G_1)=\{\{1,k\}:k\in N_G(1)\}, E(G_i)=\{\{i,k\}: k\in N_G(i)\setminus \{1,\ldots,{i-1}\}\},i=2,\ldots,\tau(G).$$  
		Thus,
		$$E(G)=E(G_1)\cup \cdots \cup E(G_{\tau(G)}).$$  
		Notice that each $G_i$ is a star graph $K_{1,e(G_i)}$ together with $n-e(G_i)-1$ isolated vertices. Thus, 
		$$G_i=K_{1,e(G_i)}\cup (n-e(G_i)-1)K_1.$$  
		Since the Laplacian matrix is additive over edge-disjoint subgraphs, we have
		$$L(G)=L(G_1)+\cdots+L(G_{\tau(G)}).$$  
		From the above construction and the definition of vertex cover, the maximum degrees of the subgraphs satisfy
		$$\sum_{i=1}^{\tau(G)}\Delta(G_i)=e(G),$$ where $\Delta(G)$ denote the maximum vertex degree in a graph $G$.
		
		It is well-known that \cite{Brouwer2012} for a complete bipartite graph ${K_{m,n}}$, the parameters $m$ and $n$ are Laplacian eigenvalues with multiplicities $n-1$ and $m-1$, respectively and the remaining nonzero Laplacian eigenvalue is $m+n$. Thus, we have
		$$
		\mathcal{S}_k(G_i)\leq \Delta(G_i)+k\text{ for each } i= 1, \ldots,\tau(G).
		$$  
		Using Theorem \ref{weyl}, we get
		\begin{equation}\label{tauineq}
			\mathcal{S}_k(G)\leq \sum_{i=1}^{\tau(G)}\mathcal{S}_k(G_i)=\sum_{i=1}^{\tau(G)} (\Delta(G_i) + k)=e(G)+k\tau(G).
		\end{equation} 
		Now, if $G$ is a bipartite graph, then from Equation (\ref{tauineq}) and Lemma \ref{mat=tau}, we have $$\mathcal{S}_3(G)\leq e(G)+6.$$
		
		If $G$ is a non-bipartite graph, then first assume that $G$ contains a subgraph isomorphic to $K_3$. If every edge of $G$ has at least one endpoint in the vertex set $V(K_3)$ of the triangle, then $G$ has the form $S_{n,3}$. Then, by using Lemma \ref{splitl}, the result follows. Now, suppose there exists an edge $e=ab$ whose endpoints lie in $V(G)\setminus V(K_3)$. Let $M=V(G)\setminus \{a,b,u,v,w\}$, where $V(K_3)=\{u, v, w\}$. Since $m(G)=2$, there are no edges between $V(K_3)$ and $M$, and every vertex in $M$ is adjacent to one of the endpoints of $e$, say $a$. Hence, there are no edges between $b$ and the vertices in $M$.  Therefore, $G$ is isomorphic to one of the graphs shown in Figure \ref{fig2}.
		\begin{figure}[!htb]			
			\centering
			\begin{tikzpicture}[line cap=round,line join=round,>=triangle 45,x=.9cm,y=0.9cm]
				
				\draw [line width=1pt] (-12,1.42)-- (-10.04,1.4);
				\draw [line width=1pt] (-10.04,1.4)-- (-11,3);
				\draw [line width=1pt] (-12,1.42)-- (-11,3);
				\draw [line width=1pt] (-11.98,4.48)-- (-11,3);
				\draw [line width=1pt] (-11,3)-- (-9.98,4.52);
				\draw [line width=1pt] (-6,2)-- (-5,3);
				\draw [line width=1pt] (-5,3)-- (-4,2);
				\draw [line width=1pt] (-6,2)-- (-5,1);
				\draw [line width=1pt] (-5,1)-- (-4,2);
				\draw [line width=1pt] (-6,2)-- (-4,2);
				\draw [line width=1pt] (-5,3)-- (-6,4.54);
				\draw [line width=1pt] (-5,3)-- (-4.02,4.54);
				\draw [line width=1pt] (-0.6,2)-- (0.4,3);
				\draw [line width=1pt] (0.4,3)-- (1.4,2);
				\draw [line width=1pt] (-0.6,2)-- (0.4,1);
				\draw [line width=1pt] (0.4,1)-- (1.4,2);
				\draw [line width=1pt] (-0.6,2)-- (1.4,2);
				\draw [line width=1pt] (0.4,3)-- (-0.6,4.54);
				\draw [line width=1pt] (0.4,3)-- (1.38,4.54);
				\draw [line width=1pt] (0.4,3)-- (0.4,1);
				
				\draw (-11.3,1) node[anchor=north west] {$G_1$ };
				\draw (-5.3,.8) node[anchor=north west] {$G_2$ };
				\draw (0,.8) node[anchor=north west] {$G_3$ };
				
				\begin{scriptsize}
					\draw [fill=black] (-11,3) circle (2.5pt);
					\draw [fill=black] (-12,1.42) circle (2.5pt);
					\draw [fill=black] (-10.04,1.4) circle (2.5pt);
					\draw [fill=black] (-11.98,4.48) circle (2.5pt);
					\draw [fill=black] (-9.98,4.52) circle (2.5pt);
					\draw [fill=black] (-11.5,4.56) circle (1.5pt);
					\draw [fill=black] (-11,4.56) circle (1.5pt);
					\draw [fill=black] (-10.56,4.56) circle (1.5pt);
					\draw [fill=black] (-5,3) circle (2.5pt);
					\draw [fill=black] (-4,2) circle (2.5pt);
					\draw [fill=black] (-6,2) circle (2.5pt);
					\draw [fill=black] (-5,1) circle (2.5pt);
					\draw [fill=black] (-6,4.54) circle (2.5pt);
					\draw [fill=black] (-4.02,4.54) circle (2.5pt);
					\draw [fill=black] (-5.58,4.58) circle (1.5pt);
					\draw [fill=black] (-5,4.6) circle (1.5pt);
					\draw [fill=black] (-4.48,4.56) circle (1.5pt);
					\draw [fill=black] (0.4,3) circle (2.5pt);
					\draw [fill=black] (1.4,2) circle (2.5pt);
					\draw [fill=black] (-0.6,2) circle (2.5pt);
					\draw [fill=black] (0.4,1) circle (2.5pt);
					\draw [fill=black] (-0.6,4.54) circle (2.5pt);
					\draw [fill=black] (1.38,4.54) circle (2.5pt);
					\draw [fill=black] (-0.18,4.58) circle (1.5pt);
					\draw [fill=black] (0.4,4.6) circle (1.5pt);
					\draw [fill=black] (0.92,4.56) circle (1.5pt);
				\end{scriptsize}
			\end{tikzpicture}
			\caption{\label{fig2}}				
		\end{figure}  
		If $G\cong G_1$, then $G$ is a unicyclic graph, and we have $\mathcal{S}_3(G) \leq e(G)+6$. If $G\cong G_i$ for $i=2,3$, then $G$ has the form $S_{n,3}$ and $S_{n,4}$, respectively. Thus, by Lemma \ref{splitl}, we have $\mathcal{S}_3(G)\leq e(G)+6$, completing the proof in this case.		
		
		\item[Case 3.]  If $m(G)\geq 6$, then the result follows directly from Theorem \ref{bromatc}.				
	\end{description}
	This completes the proof.
\end{proof}
A connected graph on $n$ vertices is called \emph{tricyclic} if it has exactly $n+2$ edges. The following result was established by Wang, Huang, and Liu \cite{Wang2012}.

\begin{lemma}\label{Waang}
	{\em (Wang, Huang and Liu)}
	Let $G$ be a tricyclic graph on $n$ vertices. Then,
	$$
	\mathcal{S}_k(G)\leq e(G)+\binom{k+1}{2}
	$$
	holds for all integers $k\neq 3$ with $1\leq k\leq n$.
\end{lemma}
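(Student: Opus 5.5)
This lemma is quoted from Wang, Huang and Liu, so the plan is to reconstruct their argument from the tools available above. The strategy is to split the range of $k$ into three regimes. For small $k$ I would use elementary bounds, for large $k$ the quadratic bound of Lemma \ref{wangk} specialised to tricyclic graphs, and for the remaining middle values a structural decomposition of $G$ into a spanning tree plus three extra edges.

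\textbf{Step 1: large $k$.} For a tricyclic graph $e(G)-n+1=3$, so Lemma \ref{wangk} gives the conclusion whenever
$$k\geq \frac{3n-4+\sqrt{24n^2+(n-4)^2}}{2n}.$$
As $n$ grows this threshold tends to $(3+\sqrt{25})/2=4$, and a direct check shows it is at most $4$ for every $n$. Hence all $k\geq 4$ are covered at once. The boundary values $k=n-1$ and $k=n$ are in any case immediate, since $S_n(G)=2e(G)$ and $\mu_n=0$.

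\textbf{Step 2: $k=1$.} The standard bound $\mu_1(G)\leq n\leq e(G)+1$ settles this case.

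\textbf{Step 3: $k=2$.} This is the only remaining value, since $k=3$ is excluded. Choose a spanning tree $T$ of $G$, so that $G=T\cup F$ with $F$ consisting of three edges. Corollary \ref{weylcor} gives $S_2(G)\leq S_2(T)+S_2(F)$. Because trees satisfy Brouwer's conjecture, $S_2(T)\leq e(T)+3$. The Laplacian spectrum of three edges depends only on their configuration (a $3$-matching, $P_3\cup K_2$, $P_4$, $K_{1,3}$ or $K_3$), and in each case $S_2(F)\leq 6$. This yields only $S_2(G)\leq e(G)+6$, which is too weak, so a sharper argument is needed.

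Instead I would use the identity $\mu_1+\mu_2\leq$ (sum of the two largest conjugate degrees) from Theorem \ref{groneme}. That gives $S_2(G)\leq d_1^*+d_2^*\leq 2n-1$, since vertices of degree $0$ are absent in a connected graph and at least one vertex has degree $1$ unless $G$ has minimum degree $2$. The target is $e(G)+3=n+5$, so this settles the case $n\leq 6$. For $n\geq 7$ I would combine the known bound $S_2(G)\leq d_1+d_2+2$-type estimates (via Theorem \ref{weyl} applied to a star decomposition, as in Case~2 of Theorem \ref{rochatr}, with $\tau$ replaced by the two highest-degree vertices) with the fact that a tricyclic graph has at most $n+2$ edges. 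The aim is to show that the edges not incident to the two top vertices contribute at most $e(G)-d_1-d_2+1$ to $S_2$ (as a forest plus few extra edges).

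\textbf{Main obstacle.} The main obstacle is the $k=2$ case for large $n$, and in particular controlling the residual graph after removing the two largest stars. I would first try the decomposition $G=K_{1,d_1}\cup K_{1,d_2'}\cup R$, bounding $S_2$ of each star by $\Delta+2$ and $S_2(R)$ by $\mu_1(R)+\mu_2(R)\leq$ small constants when $R$ has few cycles. If that falls short, I would fall back on the Wang–Huang–Liu case analysis over the possible cycle bases (bases of three cycles sharing vertices or edges).
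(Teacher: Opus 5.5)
The paper does not prove this lemma; it only quotes Wang, Huang and Liu. Your Steps 1 and 2 are correct. With $e(G)-n+1=3$, the threshold in Lemma~\ref{wangk} is at most $4$ because $\sqrt{25n^2-8n+16}\le 5n+4$. For $k=1$, $\mu_1(G)\le n\le e(G)+1$.

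\textbf{The gap is $k=2$, which your proposal does not establish.}

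\emph{Grone--Merris--Bai.} Theorem~\ref{groneme} gives $S_2(G)\le d_1^*+d_2^*=n+d_2^*$, against the target $n+5$. This only settles graphs with at most five vertices of degree at least $2$. A tricyclic graph can have minimum degree $2$ (for example $C_6$ plus two chords). Then $d_2^*=n$, so your claimed bound $2n-1$ is wrong, and even $n=6$ is not covered.

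\emph{Star decomposition.} Write $G=H\cup R$, with $H$ the union of the stars at the two vertices of largest degree. To reach $e(G)+3$ you need $S_2(R)\le e(R)+1$. This fails whenever $R$ is a triangle, $P_3$, $2K_2$ or a star, and such residuals do occur. Take vertices $1,2,a,b,c$, attach $s\ge 3$ pendant vertices to $1$ and $t\ge 3$ to $2$, and add the edges $ab,bc,ca,1a,1b,2a,2c$.
\begin{itemize}
\item The graph has $n=s+t+5$ and $e(G)=n+2$, so it is tricyclic.
\item Its two largest degrees are $d_1=s+2$ and $d_2=t+2$.
\item The residual $R$ is the triangle $abc$, with $S_2(R)=6$.
\item $H$ contains vertex-disjoint copies of $K_{1,s+1}$ (centre $1$, leaves its pendants and $b$) and $K_{1,t+2}$ (centre $2$, leaves its pendants, $a$ and $c$). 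By edge-monotonicity of Laplacian eigenvalues, $S_2(H)\ge d_1+d_2+1$.
\end{itemize}
Hence $S_2(H)+S_2(R)\ge d_1+d_2+7>d_1+d_2+6=e(G)+3$. No estimate that passes through Corollary~\ref{weylcor} with this decomposition can reach the target.

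\emph{Fallback.} The proposed ``case analysis over cycle bases'' is a placeholder, not an argument.

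\textbf{The fix.} What is missing is a known theorem, not a new idea. Haemers, Mohammadian and Tayfeh-Rezaie (2010) proved $S_2(G)\le e(G)+3$ for every graph $G$. This is the same paper that proves the conjecture for trees, which you already invoke. Replace your Step 3 by this result; together with your Steps 1 and 2 it gives the lemma for all $k\neq 3$. This is the standard way the case $k=2$ is disposed of for unicyclic, bicyclic and tricyclic graphs.
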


Combining Theorem~\ref{rochatr} with Lemma~\ref{Waang}, we immediately obtain the following corollary.

\begin{corollary}
	Let $G$ be a tricyclic graph on $n$ vertices with matching number $m(G)\notin \{3,4,5\}$. Then,
	$$
	\mathcal{S}_k(G)\leq e(G)+\binom{k+1}{2}
	$$
	holds for all integers $k$ satisfying $1\leq k\leq n$.
\end{corollary}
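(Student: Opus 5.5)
This corollary needs no new spectral argument; I would obtain it by splitting on $k$ and invoking two earlier results. Let $G$ be a tricyclic graph on $n$ vertices, so $G$ is connected and $e(G)=n+2$, and suppose $m(G)\notin\{3,4,5\}$. Fix $k$ with $1\le k\le n$.

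\textbf{Case $k\neq 3$.} Lemma~\ref{Waang} (Wang, Huang and Liu) applies to every tricyclic graph with no assumption on the matching number. It gives $\mathcal{S}_k(G)\le e(G)+\binom{k+1}{2}$ directly.

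\textbf{Case $k=3$.} Here I would use Theorem~\ref{rochatr}. Its only hypothesis is $m(G)\notin\{3,4,5\}$, which we have assumed. It yields $\mathcal{S}_3(G)\le e(G)+6=e(G)+\binom{4}{2}$, which is exactly the Brouwer inequality for $k=3$.

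The two cases together cover every $k\in\{1,\dots,n\}$, which proves the corollary. There is no real obstacle. The only point to check is that Theorem~\ref{rochatr} holds for arbitrary graphs, so it applies to tricyclic ones, and that Lemma~\ref{Waang} excludes only $k=3$. Together these show that the single missing index is filled precisely under the matching-number hypothesis.

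I would also note that for tricyclic graphs Theorem~\ref{angtah} already settles every $k$ once $m(G)\ge 3+\sqrt{8\cdot 3+1}=8$. So the new content of the corollary lies in the small cases $m(G)\in\{1,2,6,7\}$, although the argument above does not need this distinction.
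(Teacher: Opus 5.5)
Your proposal is correct and is exactly the paper's argument: Lemma~\ref{Waang} handles every $k\neq 3$ for tricyclic graphs, and Theorem~\ref{rochatr} supplies the missing case $k=3$ under the hypothesis $m(G)\notin\{3,4,5\}$. Your side remark is also consistent with the paper's own remark after Theorem~\ref{angtah}: for tricyclic graphs that theorem already covers all $k$ once $m(G)\ge 8$.
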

The following result relates the Laplacian eigenvalues of a graph and those of its complement. 
\begin{lemma} \label{lapcom}{\em (Merris \cite{Merris1998})} 
	Let $G$ be a graph on $n$ vertices and $\overline{G}$ be its complement. Then $$\mu_n(\overline{G})=0 \mbox{ and } \mu_i(\overline{G})=n-\mu_{n-i}(G) \mbox{ for }i=1,2,\ldots,n-1.$$
\end{lemma}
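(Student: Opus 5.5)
The plan is to use the matrix identity relating the two Laplacians and the fact that both matrices share the all-ones eigenvector. Write $J$ for the $n\times n$ all-ones matrix, $I$ for the identity and $\mathbf{1}$ for the all-ones vector. Since $A(G)+A(\overline{G})=J-I$ and $d_i(G)+d_i(\overline{G})=n-1$ for every vertex $i$, we have
$$L(G)+L(\overline{G})=(n-1)I-(J-I)=nI-J.$$
The whole argument rests on this identity together with $L(G)\mathbf{1}=L(\overline{G})\mathbf{1}=0$.

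First, since $L(G)$ is real symmetric and $L(G)\mathbf{1}=0$, I choose an orthonormal eigenbasis $x_1,\ldots,x_n$ of $L(G)$ with $x_n=\mathbf{1}/\sqrt{n}$ and $L(G)x_j=\mu_j(G)x_j$ for $j=1,\ldots,n$. This is possible because the eigenvalue $0=\mu_n(G)$ has $\mathbf{1}$ among its eigenvectors, and we can complete $\mathbf{1}/\sqrt{n}$ to an orthonormal basis of that eigenspace. With this choice, $x_1,\ldots,x_{n-1}$ carry exactly the eigenvalues $\mu_1(G)\ge\cdots\ge\mu_{n-1}(G)$, and each is orthogonal to $\mathbf{1}$.

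Next, for $j\le n-1$ we have $Jx_j=0$, so
$$L(\overline{G})x_j=(nI-J-L(G))x_j=(n-\mu_j(G))x_j,$$
while $L(\overline{G})x_n=0$. Hence $x_1,\ldots,x_n$ is also an eigenbasis of $L(\overline{G})$, and its spectrum is the multiset $\{0\}\cup\{n-\mu_j(G):1\le j\le n-1\}$.

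Finally, I sort this multiset. Since $L(\overline{G})$ is positive semidefinite, every $n-\mu_j(G)$ is nonnegative, so $0$ may be placed last and $\mu_n(\overline{G})=0$. The remaining values in nonincreasing order are $n-\mu_{n-1}(G)\ge n-\mu_{n-2}(G)\ge\cdots\ge n-\mu_1(G)$. Therefore the $i$-th largest eigenvalue is $\mu_i(\overline{G})=n-\mu_{n-i}(G)$ for $i=1,\ldots,n-1$.

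The main obstacle is bookkeeping rather than analysis: one must make sure the eigenvalue $0$ is correctly assigned when $G$ or $\overline{G}$ is disconnected. In that case $0$ has multiplicity greater than one, and some $n-\mu_j(G)$ may equal $0$ or $n$. The choice $x_n=\mathbf{1}/\sqrt{n}$ in the first step removes exactly one copy of $0$ from $G$'s spectrum. Positive semidefiniteness then guarantees that the sorted list is consistent, because ties do not affect the indexed values.
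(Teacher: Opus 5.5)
Your proof is correct and complete. The paper gives no proof of this lemma and simply cites Merris; your argument via $L(G)+L(\overline{G})=nI-J$, with the common eigenvector $\mathbf{1}$, is the standard proof of this fact. Your handling of the zero eigenvalue is sound: you remove one copy of $0$ by choosing $x_n=\mathbf{1}/\sqrt{n}$, and positive semidefiniteness of $L(\overline{G})$ puts $0$ last in the sorted list.
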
 
We end this section with the following result which establishes that if the Brouwer’s conjecture holds for all graphs when $k=n-p-1$, then it also holds for all graphs when $k=p$. Note that it is the converse of Theorem $3.1$ provided in \cite{Chen2019}.
\begin{theorem} \label{revchen0} 
	Let $p$ be a positive integer such that $1\leq p\leq n-2$. If the Brouwer's conjecture holds for any graph $G$ when $k=n-p-1$, that is, for any graph $G$ with $n$ vertices and $e(G)$ edges,
	\begin{equation}\label{n-p-1}
		\mathcal{S}_{n-p-1}(G)\leq e(G)+\binom{n-p}{2}, 
	\end{equation}
	then the Brouwer's conjecture also holds for $k=p$, that is, for the  graph $G$,
	\begin{equation}\label{pa}
		\mathcal{S}_{p}(G)\leq e(G)+\binom{p+ 1}{2}.
	\end{equation}
	Moreover, if the equality holds in (\ref{n-p-1}) if and only if $G\cong H$$($for some graph $H)$, then the equality holds in (\ref{pa}) if and only if $G\cong \overline{H}$.	
\end{theorem}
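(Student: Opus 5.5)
The plan is to apply the hypothesis (\ref{n-p-1}) to the complement $\overline{G}$ rather than to $G$, and then translate back using Lemma~\ref{lapcom}. Since (\ref{n-p-1}) is assumed for every graph on $n$ vertices, it holds in particular for $\overline{G}$:
$$\mathcal{S}_{n-p-1}(\overline{G})\leq e(\overline{G})+\binom{n-p}{2}.$$

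By Lemma~\ref{lapcom}, $\mu_i(\overline{G})=n-\mu_{n-i}(G)$ for $1\le i\le n-1$. Summing over $i=1,\ldots,n-p-1$, the index $n-i$ runs over $p+1,\ldots,n-1$, so
$$\mathcal{S}_{n-p-1}(\overline{G})=(n-p-1)n-\sum_{j=p+1}^{n-1}\mu_j(G)=(n-p-1)n-\bigl(2e(G)-\mathcal{S}_p(G)\bigr),$$
where the last step uses $\mu_n(G)=0$ and $\sum_{j}\mu_j(G)=2e(G)$.

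Next we substitute $e(\overline{G})=\binom{n}{2}-e(G)$ into the inequality for $\overline{G}$. This gives
$$\mathcal{S}_p(G)\le e(G)+\binom{n}{2}+\binom{n-p}{2}-n(n-p-1).$$
The remaining task is to check the binomial identity
$$\binom{n}{2}+\binom{n-p}{2}-n(n-p-1)=\binom{p+1}{2}.$$
Expanding, $\tfrac{n(n-1)+(n-p)(n-p-1)}{2}-n^2+np+n$. The $n^2$ and $np$ terms cancel, and what remains is $\tfrac{p^2+p}{2}$. This is just bookkeeping, and it yields (\ref{pa}).

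For the equality statement, every step above except the application of the hypothesis to $\overline{G}$ is an identity. Hence equality in (\ref{pa}) for $G$ holds exactly when equality holds in (\ref{n-p-1}) for $\overline{G}$. By assumption, the latter happens if and only if $\overline{G}\cong H$, that is, $G\cong\overline{H}$.

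The main point needing care is the index range in Lemma~\ref{lapcom}. The hypothesis $1\le p\le n-2$ guarantees that $n-p-1\ge 1$ and that all indices $i\le n-p-1\le n-1$ lie in the range where the lemma applies, so the sum reindexing is valid. The rest is the routine algebra checked above.
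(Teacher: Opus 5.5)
Your proof is correct and follows essentially the same route as the paper: you apply the hypothesis to $\overline{G}$, use Lemma~\ref{lapcom} to rewrite $\mathcal{S}_{n-p-1}(\overline{G})$ as $(n-p-1)n-2e(G)+\mathcal{S}_p(G)$, substitute $e(\overline{G})=\binom{n}{2}-e(G)$, and simplify the binomial expression to $\binom{p+1}{2}$. The equality part is handled the same way as in the paper, since every step except the single application of the hypothesis to $\overline{G}$ is an identity.
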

\begin{proof}
	Since $\sum_{i=1}^{n-1}\mu_i(G)=2e(G)$, we have $\mathcal{S}_{p}(G)=2e(G)-\sum_{i=p+1}^{n-1}\mu_i(G).$
	Using Lemma \ref{lapcom}, we have 
	\begin{align*}
		\mathcal{S}_{p}(G)
		&=2e(G)-\sum_{i=p+1}^{n-1}\left(n-\mu_{n-i}(\overline{G})\right) \quad \\
		&=2e(G)-\sum_{i=p+1}^{n-1}n+\sum_{i=p+1}^{n-1} \mu_{n-i}(\overline{G})\\
		&=2e(G)-(n-p-1)n+\mathcal{S}_{n-p-1}(\overline{G}).
	\end{align*}
	By Condition (\ref{n-p-1}), we obtain
	\begin{align*}
		\mathcal{S}_{p}(G)
		&\leq 2e(G)-(n-p-1)n+e(\overline{G})+\binom{n-p}{2}\\
		&=e(G)+e(G)+e(\overline{G})-(n-p-1)n+\binom{n-p}{2}\\
		&=e(G)+\binom{n}{2}-(n-p-1) n+\binom{n-p}{2}\\
		&=e(G)+\binom{p+1}{2},
	\end{align*}
	which shows that if the Brouwer's conjecture holds for all graphs when  $k=n-p-1$, then it also holds for all graphs when $k=p$. Now, suppose  that the equality holds in (\ref{n-p-1}) if and only if $G\cong H$. Then, $\mathcal{S}_{n-p-1}(\overline{G})= e(\overline{G})+\binom{n-p}{2}$ holds if and only if $\overline{G}\cong H$, that is, $G\cong \overline{H}$. This yields that the equality holds in (\ref{pa}) if and only if $G \cong \overline{H}$.
\end{proof}
\section{Proof of the Theorem \ref{samt}}\label{sec4}

To prove Theorem \ref{samt}, it is sufficient to prove that if for a fixed positive integer $h$ and a graph $G$, $\mathcal{S}_h(\overline{G})\leq e(\overline{G})+\binom{h+1}{2}$ holds whenever $\mathcal{S}_h(G)\leq e(G)+\binom{h+1}{2}$, then the graph $G$ satisfies Brouwer’s conjecture. Thus, under the given condition, we prove that Brouwer's conjecture is true. For that, we need the following sequence of results under this given condition. Note that all the results considered in this section are assumed under this given condition.

\begin{lemma} \label{revchen} 
Let $p$ be a positive integer with $1\leq p\leq n-2$. If the Brouwer's conjecture holds for a graph $G$ when $k=n-p-1$, that is, for the graph $G$ with $n$ vertices and $e(G)$ edges,
\begin{equation*}\label{an-p-1}
\mathcal{S}_{n-p-1}(G)\leq e(G)+\binom{n-p}{2}, 
\end{equation*}
then the Brouwer's conjecture also holds for $k=p$, that is, for the same graph $G$,
\begin{equation*}\label{p} 	
\mathcal{S}_{p}(G) \leq e(G)+\binom{p + 1}{2}.
\end{equation*}
\end{lemma}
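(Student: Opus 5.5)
The plan is to repeat the complement computation from the proof of Theorem~\ref{revchen0}. The only change is where the bound on the complement comes from. In Theorem~\ref{revchen0} the inequality for $\overline{G}$ at index $n-p-1$ was assumed for all graphs. Here it will come from the standing hypothesis of this section, applied to the single graph $G$ with $h=n-p-1$. So I read the standing condition as: whenever $\mathcal{S}_h(G)\leq e(G)+\binom{h+1}{2}$ holds at the relevant index $h$, the same inequality holds for $\overline{G}$. I apply it with $h=n-p-1$, which lies in $\{1,\ldots,n-2\}$ because $1\leq p\leq n-2$.

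\textbf{Step 1 (transfer to the complement).} The lemma assumes $\mathcal{S}_{n-p-1}(G)\leq e(G)+\binom{n-p}{2}$, which is the $h$-Brouwer inequality for $G$ with $h=n-p-1$. The standing condition then gives
$$\mathcal{S}_{n-p-1}(\overline{G})\leq e(\overline{G})+\binom{n-p}{2}.$$

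\textbf{Step 2 (spectral identity).} I use $\mu_n(G)=0$ and $\sum_{i=1}^{n-1}\mu_i(G)=2e(G)$, together with Lemma~\ref{lapcom}, which gives $\mu_i(G)=n-\mu_{n-i}(\overline{G})$ for $1\leq i\leq n-1$. Exactly as in the proof of Theorem~\ref{revchen0}, these yield
$$\mathcal{S}_p(G)=2e(G)-(n-p-1)n+\mathcal{S}_{n-p-1}(\overline{G}).$$

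\textbf{Step 3 (arithmetic).} I substitute the bound from Step 1 into this identity and use $e(G)+e(\overline{G})=\binom{n}{2}$. The claim then reduces to the binomial identity
$$\binom{n}{2}-(n-p-1)n+\binom{n-p}{2}=\binom{p+1}{2},$$
which was already verified in the proof of Theorem~\ref{revchen0}. This gives $\mathcal{S}_p(G)\leq e(G)+\binom{p+1}{2}$.

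\textbf{Main obstacle.} No step is technically hard. The one point needing care is Step 1: the standing hypothesis must be invoked at index $h=n-p-1$, not at some other fixed $h$, and only for the given $G$ and its complement rather than for all graphs. Once this is fixed, the rest is the computation already carried out for Theorem~\ref{revchen0}.
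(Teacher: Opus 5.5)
Your proposal is correct and is essentially the paper's proof. It transfers the $(n-p-1)$-inequality from $G$ to $\overline{G}$ via the section's standing hypothesis, then uses $\mathcal{S}_p(G)=2e(G)-(n-p-1)n+\mathcal{S}_{n-p-1}(\overline{G})$, $e(G)+e(\overline{G})=\binom{n}{2}$ and the binomial identity. Your reading of the standing hypothesis as available at the index $h=n-p-1$, rather than at one pre-fixed $h$, is exactly how the paper invokes it as well.
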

\begin{proof}
Since $\sum_{i=1}^{n-1}\mu_i(G)=2 e(G)$, it follows that
$$\mathcal{S}_{p}(G)=2 e(G)-\sum_{i=p+1}^{n-1} \mu_i(G).$$
Using Lemma \ref{lapcom}, we have 
\begin{align*}
\mathcal{S}_{p}(G)
& =2 e(G)-\sum_{i=p+1}^{n-1}\left(n-\mu_{n-i}(\overline{G})\right) \quad \\
&=2e(G)-\sum_{i=p+1}^{n-1}n+\sum_{i=p+1}^{n-1}\mu_{n-i}(\overline{G})\\
&= 2e(G)-(n-p-1)n+\mathcal{S}_{n-p-1}(\overline{G}).
\end{align*}
Under the assumption that $\mathcal{S}_h(\overline{G})\leq e(\overline{G})+\binom{h+1}{2}$ holds whenever $\mathcal{S}_h(G)\leq e(G)+\binom{h+1}{2}$, we obtain
\begin{align*}
\mathcal{S}_{p}(G)
& \leq 2 e(G)-(n-p-1) n+e(\overline{G})+\binom{n-p}{2} \\
&= e(G)+e(G)+e(\overline{G})-(n-p-1) n+\binom{n-p}{2}  \\
& =e(G)+\binom{n}{2}-(n-p-1) n+\binom{n-p}{2} \\
& =e(G)+\binom{p+1}{2}.
\end{align*}
This shows that if the Brouwer's conjecture holds for a graph when $k=n-p-1$, then it also holds for the same graph when $k=p$. 
\end{proof}
The next result shows that for a graph with a large number of vertices compared to its covering number, Brouwer's conjecture holds for various values of $k$. This provides partial confirmation of Brouwer's conjecture under specific structural conditions.
\begin{lemma}\label{brotau} Let $G$ be a graph with $n$ vertices and $e(G)$ edges having covering number $\tau(G)$. For a positive integer $p$, if $n\geq 2\tau(G)+p$, then $$\mathcal{S}_k(G)\leq e(G)+\binom{k+1}{2}$$ holds for all integers $k$ satisfying $1\leq k\leq p$.
\end{lemma}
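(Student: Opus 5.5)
The plan is to reduce the small indices $k\le p$ to the large complementary indices $n-k-1$ via Lemma~\ref{revchen}, and to handle the large indices with the vertex-cover bound (\ref{tauineq}). That bound was derived in Case~2 of the proof of Theorem~\ref{rochatr}, but its derivation uses nothing about $m(G)=2$. Decompose $E(G)$ into $\tau(G)$ edge-disjoint stars centred at the vertices of a minimum vertex cover, and apply Theorem~\ref{weyl}. This gives, for every graph $G$ and every $k$,
$$\mathcal{S}_k(G)\leq e(G)+k\,\tau(G).$$
I will first restate this as a general fact.

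\textbf{Step 1 (trivial case).} If $\tau(G)=0$, then $G$ is empty. In that case $\mathcal{S}_k(G)=0$ and there is nothing to prove. So assume $\tau(G)\geq 1$. Then $n\ge 2\tau(G)+p$ gives $p\le n-2$, and hence every $k$ with $1\le k\le p$ satisfies $1\le k\le n-2$, as Lemma~\ref{revchen} requires.

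\textbf{Step 2 (large index).} Fix $1\le k\le p$ and set $q=n-k-1$. By the general bound,
$$\mathcal{S}_{q}(G)\leq e(G)+q\,\tau(G).$$
The hypothesis gives $n-k\ge n-p\ge 2\tau(G)$, so $\tau(G)\le (q+1)/2$. Therefore
$$q\,\tau(G)\le \frac{q(q+1)}{2}=\binom{n-k}{2},$$
and so $\mathcal{S}_{n-k-1}(G)\le e(G)+\binom{n-k}{2}$. That is, $G$ satisfies Brouwer's inequality at the index $n-k-1$.

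\textbf{Step 3 (transfer to $k$).} Apply Lemma~\ref{revchen} with $p$ replaced by $k$. This needs the standing assumption of this section, namely that Brouwer's inequality at a given index for $G$ forces it for $\overline{G}$. Lemma~\ref{revchen} then converts the inequality at index $n-k-1$ into
$$\mathcal{S}_k(G)\le e(G)+\binom{k+1}{2}.$$
Doing this for each $k\le p$ finishes the proof.

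The main point needing care is Step 2. There I must check that (\ref{tauineq}) is valid for arbitrary graphs. It is, since it only uses the star decomposition together with $\mathcal{S}_k(K_{1,d})\le d+k$. I must also check that the arithmetic $\tau(G)\le (n-k)/2$ is exactly what $n\ge 2\tau(G)+p$ supplies. The rest is a direct invocation of Lemma~\ref{revchen} under the section's standing hypothesis.
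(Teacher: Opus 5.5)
Your proof is correct and is essentially the paper's own argument. You apply the star-decomposition bound $\mathcal{S}_q(G)\le e(G)+q\,\tau(G)$ at $q=n-k-1$, use $n\ge 2\tau(G)+k$ to get $q\,\tau(G)\le\binom{n-k}{2}$, and transfer to index $k$ via Lemma~\ref{revchen}; your treatment of $\tau(G)=0$ and the check $k\le n-2$ only make explicit what the paper leaves implicit. As in the paper, Step~3 needs the section's standing hypothesis at every index $n-k-1$ with $1\le k\le p$, not merely at one fixed $h$.
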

\begin{proof}  

Using Equation (\ref{tauineq}), we get
\begin{equation*}\label{xtauineq}
\mathcal{S}_k(G)\leq \sum_{i=1}^{\tau(G)}\mathcal{S}_k(G_i)=\sum_{i=1}^{\tau(G)} (\Delta(G_i) + k)=e(G)+k\tau(G).
\end{equation*} 
Now, for $k=n-\alpha-1 $ $(1\leq \alpha \leq p)$, assume that
\begin{equation}\label{maineq}
\mathcal{S}_k(G)\leq e(G)+k\tau(G) \leq e(G)+\binom{k+1}{2}.
\end{equation} 
To ensure the last inequality holds in \ref{maineq}, we require
$$(n-\alpha-1)\tau(G)\leq \binom{n-\alpha}{2},$$  
which simplifies to
$$n\geq 2\tau(G)+\alpha.$$  
Thus, under the condition $n\geq 2\tau(G)+p $, we have  $n\geq 2\tau(G)+p\geq 2\tau(G)+\alpha$,  therefore the result follows by Lemma \ref{revchen}. This completes the proof. 	  
\end{proof}

The relationship between the matching number and covering number of a graph is established in the following result. Although a proof already exists in the literature, we provide a simpler one here.
\begin{lemma}\label{mat and tau}
Let $G$ be a graph with the matching number $m(G)$ and covering number $\tau(G)$. Then $$\tau(G)\leq 2m(G).$$
\end{lemma}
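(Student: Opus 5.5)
The plan is the standard argument: take a maximum matching $M$ in $G$, with $|M|=m(G)$, and let $V(M)$ be the set of all endpoints of the edges of $M$. Since the edges of $M$ are pairwise vertex-disjoint, $|V(M)|=2m(G)$. The claim I will establish is that $V(M)$ is a vertex cover of $G$. Once this is shown, the definition of $\tau(G)$ as the minimum size of a vertex cover gives $\tau(G)\leq |V(M)|=2m(G)$ directly.

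To show that $V(M)$ covers every edge, I will argue by contradiction using maximality. Suppose some edge $e=xy$ of $G$ has neither endpoint in $V(M)$. Then $e$ is vertex-disjoint from every edge of $M$, so $M\cup\{e\}$ is again a matching. Its size is $m(G)+1$, which contradicts the choice of $M$ as a maximum matching. Therefore every edge of $G$ has at least one endpoint in $V(M)$. (The argument only uses that $M$ is maximal with respect to inclusion, so any maximal matching would serve.)

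There is no real obstacle here; the only point needing care is the degenerate case where $G$ has no edges. Then $m(G)=0$, and the empty set is a vertex cover, so $\tau(G)=0\leq 0$ and the inequality holds trivially. I would also remark, for use in the subsequent results, that combining this with the trivial bound $m(G)\leq\tau(G)$ (each matching edge needs a distinct cover vertex) gives $m(G)\leq\tau(G)\leq 2m(G)$. This is the form in which the lemma will be fed into Lemma~\ref{brotau}.
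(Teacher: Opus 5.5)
Your proposal is correct and matches the paper's proof: take a maximum matching $M$, note that its $2m(G)$ endpoints form a vertex cover because any uncovered edge could be added to $M$, and conclude $\tau(G)\leq 2m(G)$. Your extra remarks on the edgeless case and on maximal matchings are accurate but not needed.
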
 
\begin{proof}
Let $M$ be a maximum matching in $G$ with $|M|=m(G)$. Let $V(\mathcal{M}) $ be the set of vertices corresponding to the edges in the maximum matching. That is, for each edge $\{i,j\} \in M$, both $i$ and $j$ belong to $V(\mathcal{M})$. Then, $|V(\mathcal{M})|=2m(G)$. Clearly, every edge in $M$ is  covered by $V(\mathcal{M})$. Since $M$ is maximum, any edge not in $M$ must share at least one vertex with an edge in $M$ (otherwise $M$ wouldn't be maximal). So, every edge in $G$ is incident to at least one vertex in $V(\mathcal{M})$. Hence, $V(\mathcal{M})$ is a vertex cover. So, the minimum vertex cover $\tau(G)$ must satisfy $\tau(G)\leq |V(\mathcal{M})|=2m(G)$ and this proves the result.
\end{proof}
Our next result verifies Brouwer’s conjecture for all $k$ up to a given positive integer $p$ under a linear constraint on the number of vertices in terms of $p$.  
\begin{lemma}\label{my1}
Let $G$ be a graph with $n$ vertices and $e(G)$ edges. If $p$ is a positive integer such that $n\geq 9p-4$, then $$\mathcal{S}_k(G)\leq e(G)+\binom{k+1}{2}$$ holds for all integers $k$ satisfying $1\leq k\leq p$.
\end{lemma}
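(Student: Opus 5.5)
The plan is a dichotomy on the matching number $m(G)$, using Theorem \ref{bromatc} when the matching number is large and Lemma \ref{brotau} when it is small. The threshold $9p-4$ is exactly what makes the two regimes meet.

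\textbf{Case $m(G)\geq 2p$.} Then $\lfloor m(G)/2\rfloor\geq p$. Theorem \ref{bromatc} gives $\mathcal{S}_k(G)\leq e(G)+\binom{k+1}{2}$ for every $1\leq k\leq \lfloor m(G)/2\rfloor$, and in particular for all $1\leq k\leq p$. Nothing further is needed here, and the hypothesis $n\geq 9p-4$ is not even used.

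\textbf{Case $m(G)\leq 2p-1$.} Lemma \ref{mat and tau} gives
$$\tau(G)\leq 2m(G)\leq 4p-2.$$
Hence
$$2\tau(G)+p\leq 2(4p-2)+p=9p-4\leq n.$$
So the hypothesis $n\geq 2\tau(G)+p$ of Lemma \ref{brotau} is satisfied. That lemma then yields $\mathcal{S}_k(G)\leq e(G)+\binom{k+1}{2}$ for all $1\leq k\leq p$.

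Since the two cases exhaust all possibilities, the statement follows.

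The main point needing care is that Lemma \ref{brotau} does not hold unconditionally. It passes through Lemma \ref{revchen}, which relies on the standing assumption of this section: that $\mathcal{S}_h(\overline{G})\leq e(\overline{G})+\binom{h+1}{2}$ whenever $\mathcal{S}_h(G)\leq e(G)+\binom{h+1}{2}$. I will state explicitly that the present lemma is proved under that same assumption, as announced at the start of Section \ref{sec4}.

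Otherwise the only computation is the arithmetic check $2(4p-2)+p=9p-4$. This check explains the constant $9p-4$: it is the smallest $n$ for which the bound $\tau\leq 2m$, applied in the regime where Theorem \ref{bromatc} fails, still feeds into Lemma \ref{brotau}.
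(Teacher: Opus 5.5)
Your proof is correct and is essentially the paper's own argument: the paper likewise settles the case $m(G)\geq 2k$ by Theorem~\ref{bromatc}, and otherwise uses Lemma~\ref{mat and tau} to get $n\geq 9p-4\geq 8k-4+p\geq 4m(G)+p\geq 2\tau(G)+p$ before applying Lemma~\ref{brotau}, so splitting once at $2p$ rather than at $2k$ for each $k$ changes nothing. Your caveat about the standing assumption of Section~\ref{sec4} also matches the paper; note only that, through Lemma~\ref{revchen}, Lemma~\ref{brotau} uses that assumption at every $h=n-\alpha-1$ with $1\leq\alpha\leq p$, not at a single fixed $h$.
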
 
\begin{proof}
Let $1\leq k\leq p$. By Theorem \ref{bromatc}, Brouwer’s conjecture holds for all graphs whose matching number is at least $2k$. Thus, it suffices to consider graphs with matching number at most $2k-1$. If $n\geq 9p-4$, then by Lemma \ref{mat and tau}, we have  $$n\geq 9p-4\geq 8k-4+p\geq 4m(G)+p\geq 2\tau(G)+p.$$  
Hence, by using Lemma \ref{brotau}, the result follows, establishing that Brouwer’s conjecture holds for $k$. This completes the proof.
\end{proof}
\begin{corollary}\label{prince}
If $G$ is a graph with $n$ vertices and $e(G)$ edges, then $$\mathcal{S}_k(G)\leq e(G)+\binom{k+1}{2}$$ holds for all integers $k$ satisfying $1\leq k\leq\lfloor \frac{ n+4} {9}\rfloor$.
\end{corollary}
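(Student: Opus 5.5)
The corollary follows by specializing Lemma \ref{my1} to the largest admissible $p$. Set $p=\lfloor \frac{n+4}{9}\rfloor$. If $p=0$, that is $n<5$, the range $1\leq k\leq p$ is empty, so there is nothing to prove. (If one prefers, the case $k=1$ for small $n$ is covered anyway by the classical bound $\mu_1(G)\leq n$.)

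So assume $p\geq 1$. By the definition of the floor function, $p\leq \frac{n+4}{9}$. Rearranging gives $9p\leq n+4$, which is exactly the hypothesis $n\geq 9p-4$ of Lemma \ref{my1}.

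Since $p$ is a positive integer with $n\geq 9p-4$, Lemma \ref{my1} applies. It gives $\mathcal{S}_k(G)\leq e(G)+\binom{k+1}{2}$ for every integer $k$ with $1\leq k\leq p=\lfloor \frac{n+4}{9}\rfloor$, which is the statement of the corollary.

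There is no real obstacle at this step; all of the substance lies in Lemma \ref{my1}. That lemma combines three ingredients: Theorem \ref{bromatc} handles graphs with $m(G)\geq 2k$; Lemma \ref{mat and tau} gives $\tau(G)\leq 2m(G)$; and Lemma \ref{brotau} handles graphs with small covering number. The only point to check here is that the floor is applied correctly, i.e.\ that $p$ is an integer satisfying $9p-4\leq n$, which holds by construction.

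The corollary is stated under the standing assumption of Section \ref{sec4}, which Lemma \ref{my1} uses through Lemma \ref{revchen}. That assumption is inherited unchanged, so nothing further is needed.
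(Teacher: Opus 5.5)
Your proposal is correct and follows the paper's argument exactly: taking $p=\lfloor \frac{n+4}{9}\rfloor$ gives $9p\le n+4$, so Lemma~\ref{my1} applies directly; your handling of the vacuous case $p=0$ and your remark about the standing assumption of Section~\ref{sec4} are harmless refinements. One caveat, which concerns the paper's Theorem~\ref{bromatc} rather than your specialization step: its derivation from Lemma~\ref{countermat} only shows that an edge-minimal counterexample must have $m(G)\le 2k-1$, not that \emph{every} graph with $m(G)\ge 2k$ satisfies the bound (deleting a matching of size at least $2k$ from a counterexample gives another counterexample, but its matching number may fall below $2k$), so the corollary is only as solid as that step.
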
 
\begin{proof}
If $p=\lfloor \frac{n+4} {9}\rfloor$, then it is clear that $n\geq 9p-4$.  Thus, the result follows by using Lemma \ref{my1}.
\end{proof}
Now, we prove that if for a fixed integer $k$ and a graph $G$, $\mathcal{S}_k(\overline{G})\leq e(\overline{G})+\binom{k+1}{2}$ holds whenever $\mathcal{S}_k(G)\leq e(G)+\binom{k+1}{2}$, then the graph $G$ satisfies Brouwer’s conjecture.
\begin{proof}[\bf Proof of the Theorem \ref{samt}]
Let $G$ be a graph with $n$ vertices. Now, construct a new graph $H=G\cup(8n-4)K_1$. It is clear that $E(H)=E(G)$ and $|V(H)|=9n-4$. Also,  for $1\leq k \leq n$, it is easy to see that $$S_k(G)=S_k(H).$$ Using Corollary \ref{prince} on the graph $H$, we obtain $$\mathcal{S}_k(G)=\mathcal{S}_k(H)\leq e(H)+\binom{k+1}{2}=e(G)+\binom{k+1}{2}$$ holds for all integers $k$ satisfying $1\leq k\leq n$. This completes the proof.
\end{proof}
\subsection{Alternative proof of Theorem \ref{samt} for bipartite graphs}
We provide an additional proof of Theorem \ref{samt} related to Brouwer's conjecture for bipartite graphs. First, we establish Brouwer’s conjecture for all $k\leq p$ in the case of bipartite graphs, subject to a linear bound on the number of vertices in terms of $p$. Note that this result is a further refinement of Lemma \ref{my1} when $G$ is bipartite.

\begin{lemma} \label{my2}
Let $G$ be a bipartite graph with $n$ vertices and $e(G)$ edges. If $p$ is a positive integer such that $n\geq 5p-2$, then $$\mathcal{S}_k(G)\leq e(G)+\binom{k+1}{2}$$ holds for all integers $k$ satisfying $1\leq k\leq p$.
\end{lemma}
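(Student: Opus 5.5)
Mirror the proof of Lemma \ref{my1}, replacing the general bound $\tau(G)\le 2m(G)$ (Lemma \ref{mat and tau}) with the Kőnig--Egerváry equality $\tau(G)=m(G)$ (Lemma \ref{mat=tau}), which holds since $G$ is bipartite. This halves the vertex-cover estimate and turns the requirement $n\ge 9p-4$ into $n\ge 5p-2$.

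Fix $k$ with $1\le k\le p$. By Theorem \ref{bromatc}, the inequality $\mathcal{S}_k(G)\le e(G)+\binom{k+1}{2}$ already holds whenever $k\le \lfloor m(G)/2\rfloor$, that is, whenever $m(G)\ge 2k$. So we may assume $m(G)\le 2k-1\le 2p-1$. Lemma \ref{mat=tau} then gives $\tau(G)=m(G)\le 2p-1$, and therefore
$$2\tau(G)+p\le 2(2p-1)+p=5p-2\le n.$$
Lemma \ref{brotau}, applied with this $p$, now yields $\mathcal{S}_j(G)\le e(G)+\binom{j+1}{2}$ for all $1\le j\le p$, and in particular for $j=k$. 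Since $k\le p$ was arbitrary, the lemma follows.

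The main point to verify is the hypothesis of Lemma \ref{brotau}. That lemma routes through Lemma \ref{revchen}, which relies on the standing assumption of Section \ref{sec4} (the complement condition at a fixed index). The present lemma is stated under that same standing assumption, so the reliance is legitimate. I would also check the edge case in which the reduction leaves $m(G)$ small, for instance $m(G)=0$ (where $\tau(G)=0$ and the bound is trivial). Finally, I would confirm that Lemma \ref{brotau} only needs $n\ge 2\tau(G)+p$, with no connectivity hypothesis, so bipartiteness enters only through $\tau(G)=m(G)$. As in Corollary \ref{prince}, one can then deduce validity for $1\le k\le\lfloor (n+2)/5\rfloor$, and pad with isolated vertices to obtain the bipartite version of Theorem \ref{samt}.
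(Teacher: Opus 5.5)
Your argument is the paper's proof essentially verbatim: reduce via Theorem~\ref{bromatc} to $m(G)\le 2k-1$, use K\H{o}nig--Egerv\'ary (Lemma~\ref{mat=tau}) to get $\tau(G)=m(G)\le 2p-1$ so that $2\tau(G)+p\le 5p-2\le n$, and finish with Lemma~\ref{brotau}. One caution about your closing remark, which applies equally to the paper: Lemma~\ref{brotau} invokes Lemma~\ref{revchen} at every index $n-\alpha-1$ with $1\le\alpha\le p$, so the complement hypothesis is actually used at all of these indices rather than at a single fixed $h$, and calling the reliance ``legitimate'' passes over this.
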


\begin{proof}
Let $1\leq k\leq p$. By Theorem \ref{bromatc}, Brouwer’s conjecture holds for all graphs with matching number at least $2k$. Thus, it suffices to consider graphs with matching number at most $2k-1$. If $n\geq 5p-2$, then by Lemma \ref{mat=tau}, we have  
$$n\geq 5p-2\geq 4k-2+p\geq 2m(G)+p=2\tau(G)+p.$$  
Hence, by using Lemma \ref{brotau}, the result follows.
\end{proof}
The following is an immediate corollary.
\begin{corollary}\label{prince1}
If $G$ is a bipartite graph with $n$ vertices and $e(G)$ edges, then $$\mathcal{S}_k(G)\leq e(G)+\binom{k+1}{2}$$ holds for all integers $k$ satisfying $1\leq k\leq\lfloor \frac{n+2}{5}\rfloor$.
\end{corollary}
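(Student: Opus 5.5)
This is an immediate consequence of Lemma~\ref{my2}, in the same way that Corollary~\ref{prince} follows from Lemma~\ref{my1}. I would take $p=\left\lfloor \frac{n+2}{5}\right\rfloor$. Since $p\le \frac{n+2}{5}$, we get $5p\le n+2$, that is, $n\ge 5p-2$. This is exactly the hypothesis of Lemma~\ref{my2}.

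The only point to check is that $p$ is a positive integer, as Lemma~\ref{my2} requires. We have $p\ge 1$ precisely when $n\ge 3$. For $n\le 2$ the range $1\le k\le \lfloor\frac{n+2}{5}\rfloor$ is empty, so the statement is vacuous.

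With $p\ge 1$ and $n\ge 5p-2$ in hand, Lemma~\ref{my2} gives
$$\mathcal{S}_k(G)\le e(G)+\binom{k+1}{2}$$
for every integer $k$ with $1\le k\le p=\left\lfloor\frac{n+2}{5}\right\rfloor$, which is the claim.

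There is no genuine obstacle at this stage. All the substantive work is already contained in Lemma~\ref{my2}, which in turn rests on Theorem~\ref{bromatc}, the Kőnig--Egerváry Lemma~\ref{mat=tau}, and Lemma~\ref{brotau}.
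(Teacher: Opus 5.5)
Your proof is the same as the paper's: set $p=\lfloor\frac{n+2}{5}\rfloor$, observe $n\ge 5p-2$, and apply Lemma~\ref{my2}. Your check that $p\ge 1$ exactly when $n\ge 3$ is a harmless detail the paper leaves implicit.

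One point to record: like everything in Section~\ref{sec4}, this corollary depends on that section's standing complement hypothesis, which enters through Lemma~\ref{brotau}'s appeal to Lemma~\ref{revchen}. That hypothesis should appear in your list of ingredients, because an unconditional version would, after padding with isolated vertices as in the second proof of Theorem~\ref{samt}, give Brouwer's conjecture for every bipartite graph.
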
 
\begin{proof}
If $p=\lfloor \frac{n+2}{5}\rfloor$, then it is clear that $n\geq 5p-2$. Thus, the result follows by applying Lemma \ref{my2}.
\end{proof}
Now, using the above results, we provide a proof for Theorem \ref{samt} for bipartite graphs.
\begin{proof}[2nd proof of Theorem \ref{samt} for bipartite graphs]
Let $G$ be a bipartite graph on $n$ vertices. Now, construct a new bipartite graph $H=G\cup(4n-2)K_1$. It is clear that $|V(H)|=5n-2$ and $E(H)=E(G)$. Also,  for $1\leq k\leq n$, it is easy to see that $S_k(G)=S_k(H).$ Thus, applying Corollary \ref{prince1} on the graph $H$, we have $$\mathcal{S}_k(G)=\mathcal{S}_k(H) \leq e(H)+\binom{k+1}{2}=e(G)+\binom{k+1}{2}$$ holds for all integers $k$ satisfying $1\leq k\leq n$. This completes the proof.
\end{proof}

\bigskip

\bigskip
\noindent {\bf Data availability.} No data was used for the research described in the article.

\bigskip
\noindent {\bf Disclosure statement.} The authors do not have any conflicts of interest.

\bigskip
	
\noindent{\bf Acknowledgements.}  The corresponding author is thankful to IIT Bhubaneswar, India for providing the post doctoral fellowship (Grant No.: F.15-12/2021-Acad/SBS-PDF-01).

\end{document}